\begin{document}

\hfuzz=6pt

\widowpenalty=10000

\newtheorem{cl}{Claim}
\newtheorem{theorem}{Theorem}[section]
\newtheorem{proposition}[theorem]{Proposition}
\newtheorem{coro}[theorem]{Corollary}
\newtheorem{lemma}[theorem]{Lemma}
\newtheorem{definition}[theorem]{Definition}
\newtheorem{assum}{Assumption}[section]
\newtheorem{example}[theorem]{Example}
\newtheorem{remark}[theorem]{Remark}
\renewcommand{\theequation}
{\thesection.\arabic{equation}}

\def\SL{\sqrt H}

\newcommand{\mar}[1]{{\marginpar{\sffamily{\scriptsize
        #1}}}}

\newcommand{\as}[1]{{\mar{AS:#1}}}

\newcommand\R{\mathbb{R}}
\newcommand\RR{\mathbb{R}}
\newcommand\CC{\mathbb{C}}
\newcommand\NN{\mathbb{N}}
\newcommand\ZZ{\mathbb{Z}}
\def\RN {\mathbb{R}^n}
\renewcommand\Re{\operatorname{Re}}
\renewcommand\Im{\operatorname{Im}}

\newcommand{\mc}{\mathcal}
\newcommand\D{\mathcal{D}}
\def\hs{\hspace{0.33cm}}
\newcommand{\la}{\alpha}
\def \l {\alpha}
\newcommand{\eps}{\varepsilon}
\newcommand{\pl}{\partial}
\newcommand{\supp}{{\rm supp}{\hspace{.05cm}}}
\newcommand{\x}{\times}
\newcommand{\lag}{\langle}
\newcommand{\rag}{\rangle}

\newcommand\wrt{\,{\rm d}}

\title[Weak type $(p,p)$ bounds for
Schr\"odinger  groups via generalized Gaussian estimates]
{Weak type $(p,p)$ bounds for
Schr\"odinger  groups via generalized Gaussian estimates}
\author{Zhijie Fan}

\address{Zhijie Fan, Department of Mathematics, Sun Yat-sen (Zhongshan)
University, Guangzhou, 510275, P.R. China}
\email{fanzhj3@mail2.sysu.edu.cn}

 \date{\today}
 \subjclass[2010]{42B37, 35J10,  47F05}
\keywords{ Weak type $(p,p)$ bounds, Schr\"odinger  group,  generalized Gaussian estimates, off-diagonal estimates, space of homogeneous type }

\begin{abstract}
Let $L$ be a non-negative self-adjoint operator acting on $L^2(X)$,
where $X$ is a space of homogeneous type with a dimension $n$. Suppose that
the heat operator $e^{-tL}$
satisfies  the generalized   Gaussian $(p_0, p'_0)$-estimates of order $m$ for some $1\leq p_0 < 2$.
It is known  that the operator
 $(I+L)^{-s } e^{itL}$ is bounded on $L^p(X)$ for $s\geq n|{1/ 2}-{1/p}| $ and $ p\in (p_0, p_0')$
  (see for example, \cite{Blunck2, BDN, CCO, CDLY, DN, Mi1}).
 In this paper we  study the endpoint case $p=p_0$ and show that for $s_0= n\big|{1\over  2}-{1\over  p_0}\big|$,
 the operator $(I+L)^{-{s_0}}e^{itL}  $ is of weak type
 $(p_{0},p_{0})$, that is,
  there is a constant $C>0$, independent of $t$ and $f$ so that
\begin{eqnarray*}
\mu\left(\left\{x: \big|(I+L)^{-s_0}e^{itL} f(x)\big|>\alpha \right\} \right)\leq C
(1+|t|)^{n(1 - {p_0\over 2}) }
\left( {\|f\|_{p_0} \over \alpha} \right)^{p_0} ,  \ \ \ t\in{\mathbb R}
\end{eqnarray*}
 for $\alpha>0$ when $\mu(X)=\infty$, and $\alpha>\big(\|f\|_{p_{0}}/\mu(X) \big)^{p_{0}}$ when $\mu(X)<\infty$.

 Our results can be applied to
 Schr\"odinger operators with rough  potentials and
  higher order elliptic operators with bounded measurable coefficients   although in general, their semigroups fail to satisfy Gaussian upper bounds.

\end{abstract}

\maketitle


\section{Introduction}
Throughout the paper we suppose  that  $(X, d, \mu)$ is a metric measure space  with a distance function $d$
and  a measure  $\mu$.
We say that $(X, d, \mu)$ satisfies
 the doubling property (see Chapter 3, \cite{CW})
if there  exists a constant $C>0$ such that
\begin{eqnarray}\label{e2.1}
\mu(B(x,2r))\leq C \mu(B(x, r))\quad \forall\,r>0,\,x\in X,
\end{eqnarray}
 Note that the doubling property  implies the following
strong homogeneity property,
\begin{align}\label{doubling}
\mu(B(x,\lambda r))\leq C\lambda^{n}\mu(B(x,r)).
\end{align}
 for some $C, n>0$ uniformly for all $\lambda\geq 1$ and $x\in X$.
In Euclidean space with Lebesgue measure, the parameter $n$ corresponds to
the dimension of the space,
  but in our more abstract setting, the optimal $n$
 need not even be an integer.

 Let $L$ be a non-negative self-adjoint operator on the Hilbert space $L^2(X).$
 Consider
 the Schr\"odinger equation in $X\times {\mathbb R},$
 \begin{eqnarray*}\label{e1.00}
\left\{
\begin{array}{ll}
  i{\partial_t u } + L u=0,\\[4pt]
 u|_{t=0}=f
\end{array}
\right.
\end{eqnarray*}
with initial data $f$. Then
the solution can be formally written as
 \begin{equation}\label{e1.1n}
 u(x,t)=e^{itL}f(x) =   \int_0^{\infty}    e^{it\lambda}dE_L(\lambda) f(x), \ \ \ \ t\in{\mathbb R}
   \end{equation}
 for $f\in L^2(X)$, where   $E_L$ denotes the resolution of the identity associated with  $L.$
   By the spectral theorem (\cite{Mc}),  the operator   $  e^{itL}$  is  continuous on $L^2(X)$,  and forms   the Schr\"odinger group.
   A natural problem is to study the mapping properties
of families of operators derived from the Schr\"odinger group
  on various functional spaces defined on $X$.
      This   has attracted a lot of attention in the last   decades,
 and  has been a  very active research topic  in harmonic analysis and partial differential equations-- see for example,
    \cite{A, Blunck2, Br, BDN, CCO, DN,  EK, H,  H1, JN, JN2, La, Lo, Mi1, O, Sj}.

In this paper, we consider a non-negative self-adjoint operator $L$  and numbers
  $m\geq 2$ and $ 1\leq p_0\leq 2 $. Following \cite{Blunck00, Blunck2, CDLY},
we say that   the semigroup $e^{-tL}$  generated by $L$,  satisfies
 the generalized Gaussian  $(p_0, p'_0)$-estimate   of order $m$,
if there exist constants $C, c>0$ such that
\begin{equation*}
 \label{GGE}
\tag{${\rm GGE_{p_0,p'_0, m} }$}
\big\|P_{B(x, t^{1/m})} e^{-tL} P_{B(y, t^{1/m})}\big\|_{p_0\to {p'_0}}\leq
C \mu(B(x,t^{1/m}))^{-({\frac{1}{ p_0}}-{1\over p'_0})} \exp\left(-c\left({d(x,y)^m \over    t }\right)^{1\over m-1}\right)
\end{equation*}
for  every $t>0$ and $x, y\in X$.
Note that condition \eqref{GGE} for the special case $p_0=1$ is equivalent to
$m$-th order   Gaussian estimates (see   for example, \cite{Blunck2}). This means that the
semigroup $e^{-tL}$ has integral kernels $p_t(x,y)$ satisfying the following Gaussian upper estimate:
\begin{equation*}
 \label{GE}
 \tag{${\rm GE}_m$}
|p_t(x,y)| \leq {C\over \mu(B(x,t^{1/m}))} \exp\left(-c \, {  \left({d(x,y)^{m}\over    t}\right)^{1\over m-1}}\right)
\end{equation*}
for every $t>0, x, y\in X$, where $c, C$ are two positive constants and $m\geq 2.$
There are numbers of operators which satisfy generalized Gaussian estimates and, among them,
 there exist many for which classical Gaussian estimates \eqref{GE}  fail.
  This happens, e.g., for Schr\"odinger operators with rough
 potentials \cite{ScV}, second order elliptic operators with rough  lower order terms \cite{LSV}, or
 higher order elliptic operators with bounded measurable coefficients
 \cite{D2}.

 Recently, under the assumption $({\rm GGE_{p_{0},p_{0}^{\prime},m}})$, Chen, Duong, Li and Yan \cite{CDLY} showed that if $L$
satisfies the estimate $({\rm GGE_{p_{0},p_{0}^{\prime},m}})$  for some
  $m\geq 2$ and $1\leq p_{0}< 2$, then for every $p\in (p_0, p'_0)$, there exists a  constant $C=C(n,p)>0$ independent of $t$ and $f$ such that
\begin{eqnarray}\label{Lpbound}
 \left\| (I+L)^{-s }e^{itL} f\right\|_{L^{p}(X)} \leq C (1+|t|)^{s} \|f\|_{L^{p}(X)}, \ \ \ t\in{\mathbb R}, \ \
  \ s\geq n\left|{1\over  2}-{1\over  p}\right|.
\end{eqnarray}
See also     \cite{Blunck2, BDN, CCO, DN, Mi1}.

In this paper, we extend the previous result to the endpoint case $p=p_0$ and obtain the following result.

\begin{theorem}\label{main}
Suppose that $(X,d,\mu)$ is a space of homogeneous type with a dimension $n$ and that
$L$ satisfies the property $({\rm GGE}_{p_{0},p_{0}^{\prime},m})$ for some $1\leq p_{0}<2$ and $m\geq 2$.
Then for  $s_0= n\big|{1\over  2}-{1\over  p_0}\big|$,
 the operator $(I+L)^{-{s_0}}e^{itL}  $ is of weak type
 $(p_{0},p_{0})$, that is,
  there is a constant $C>0$, independent of $t$ and $f$ so that
\begin{eqnarray*}
\mu\left(\left\{x: \big|(I+L)^{-s_0}e^{itL} f(x)\big|>\alpha \right\} \right)\leq C
(1+|t|)^{n(1 - {p_0\over 2}) }
\left( {\|f\|_{p_0} \over \alpha} \right)^{p_0} ,  \ \ \ t\in{\mathbb R}
\end{eqnarray*}
 for $\alpha>0$ when $\mu(X)=\infty$ and $\alpha>\big(\|f\|_{p_{0}}/\mu(X) \big)^{p_{0}}$ when $\mu(X)<\infty$.
\end{theorem}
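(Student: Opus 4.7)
Denote $T_t := (I+L)^{-s_0}e^{itL}$. I would carry out a Calderón--Zygmund argument adapted to the semigroup structure of $L$, in the spirit of \cite{Blunck2, CDLY}. Apply a CZ decomposition of $|f|^{p_0}$ at height $\alpha^{p_0}$ on $(X,d,\mu)$ (via a Whitney covering of the super-level set of the Hardy--Littlewood maximal function of $|f|^{p_0}$), producing $f=g+\sum_i b_i$ with $\|g\|_\infty\leq C\alpha$, $\|g\|_{p_0}\leq C\|f\|_{p_0}$, each $b_i$ supported in a ball $B_i=B(x_i,r_i)$ with $\|b_i\|_{p_0}^{p_0}\leq C\alpha^{p_0}\mu(B_i)$, the $B_i$ of bounded overlap, and $\sum_i\mu(B_i)\leq C\alpha^{-p_0}\|f\|_{p_0}^{p_0}$. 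Since $(I+L)^{-s_0}$ and $e^{itL}$ are both $L^2$-contractions by the spectral theorem, Chebyshev applied to $\|g\|_2^2\leq\|g\|_\infty^{2-p_0}\|g\|_{p_0}^{p_0}\lesssim\alpha^{2-p_0}\|f\|_{p_0}^{p_0}$ yields $\mu\{|T_t g|>\alpha/2\}\lesssim\alpha^{-p_0}\|f\|_{p_0}^{p_0}$ with no $t$-dependence, disposing of the good part.

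For the bad part $b=\sum_i b_i$, the exceptional set $E=\bigcup_i 4B_i$ already obeys $\mu(E)\lesssim\alpha^{-p_0}\|f\|_{p_0}^{p_0}$ by doubling, so it suffices to prove the strong-type off-diagonal bound
$$\|\chi_{X\setminus E}\,T_t b\|_{p_0}^{p_0}\leq C(1+|t|)^{n(1-p_0/2)}\|f\|_{p_0}^{p_0}$$
and invoke Chebyshev. Since $n(1-p_0/2)=s_0\,p_0$, this reduces by bounded overlap and the $\ell^{p_0}$-triangle inequality to a per-$b_i$ estimate $\|\chi_{X\setminus 4B_i}T_t b_i\|_{p_0}\leq C(1+|t|)^{s_0}\|b_i\|_{p_0}$, whose exponent matches \eqref{Lpbound} extrapolated to the endpoint $p=p_0$.

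The technical heart is this per-$b_i$ estimate. Decompose the exterior dyadically into annuli $A_k=B(x_i,2^{k+1}r_i)\setminus B(x_i,2^k r_i)$ for $k\geq 2$, and the spectral multiplier $F(\lambda)=(1+\lambda)^{-s_0}e^{it\lambda}$ into Littlewood--Paley pieces $F=\sum_{j\geq 0}F_j$ with $F_j$ supported near $\lambda\sim 2^j$. Each $F_j$ obeys $\|F_j\|_\infty\lesssim 2^{-js_0}$, and its Sobolev norms at scale $2^j$ grow like $(1+|t|2^j)^{s}$ due to the oscillation of $e^{it\lambda}$. Feeding these into the spectral multiplier / off-diagonal machinery of \cite{CDLY, Blunck2} under $({\rm GGE}_{p_0,p_0',m})$ produces an $L^{p_0}\to L^{p_0}$ bound for $\chi_{A_k}F_j(L)\chi_{B_i}$ with Gaffney-type distance decay in $k$ (from the GGE kernel at scale $2^{-j/m}$) that beats the volume factor $2^{kn}$, coupled with a multiplier factor of order $2^{-js_0}(1+|t|2^j)^{s_0}$. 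Summing the geometric series in $j$ and $k$ yields the required $(1+|t|)^{s_0}$ growth.

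The main obstacle is extracting the \emph{sharp} exponent $s_0$ in the $|t|$-dependence. This calls for a delicate balance between the low-frequency regime $2^j\lesssim(1+|t|)^{-1}$, where $e^{it\lambda}$ is essentially non-oscillatory and the analysis collapses to that of $(I+L)^{-s_0}$, and the high-frequency regime $2^j\gtrsim(1+|t|)^{-1}$, where the oscillation must be absorbed into the $s_0$-smoothing via integration by parts in $\lambda$. Because $p=p_0$ is the endpoint exponent at which \eqref{Lpbound} fails, one cannot invoke it directly; instead, interpolation between the unweighted $L^2$ bound on $T_t$ and the GGE $L^{p_0}\to L^{p_0'}$ improvement must be carried out by hand on each spectral piece and each dyadic shell, and the resulting decays assembled into the final off-diagonal estimate.
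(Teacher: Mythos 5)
Your plan has the right general skeleton (Calder\'on--Zygmund split, spectral multiplier decomposition, off-diagonal estimates under GGE), but it misses the two structural devices that make the paper's proof work, and as a result the central per-ball estimate you propose would not go through. The paper first splits the bad part $b=\sum_j b_j$ according to whether the CZ ball radius $r_{B_j}$ is smaller or larger than a time-dependent threshold $2^{k_0}\sim (1+|t|)^{p_0/2}$. For the small-ball part $h_1$, the argument does \emph{not} use a truncated $L^{p_0}$ off-diagonal bound at all: it inserts the cancellation operator $(I-e^{-r_{B_j}^m L})^M$ (plus the complementary $I-(I-e^{-r_{B_j}^m L})^M$) and combines the $L^{p_0}\!\to\!L^2$ off-diagonal estimates with a duality/maximal-function argument, working at the $L^2$ level. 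This regularization is essential because the CZ pieces $b_j$ here carry no built-in cancellation, and there is no usable off-diagonal decay of $T_t$ at scale $r_{B_j}$ when $r_{B_j}$ is small compared to $(1+|t|)^{p_0/2}$. Your Littlewood--Paley decomposition of the multiplier $F(\lambda)=(1+\lambda)^{-s_0}e^{it\lambda}$ provides no substitute, since $F$ has no vanishing at $\lambda=0$.

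For the large-ball part $h_2$, the exceptional set in the paper is not $\bigcup 4B_j$ but the \emph{time-dilated} set $\Omega_t=\bigcup_j 2(1+|t|)^{p_0\sigma_{p_0}}B_j$, whose measure is what actually produces the factor $(1+|t|)^{n(1-p_0/2)}=(1+|t|)^{p_0\sigma_{p_0}n}$ in the theorem. This is not a harmless inefficiency on your side: the key Lemma \ref{lemm} (built on the Phragm\'en--Lindel\"of off-diagonal estimate of Lemma \ref{tt1} and Proposition \ref{besov4}) only gives geometric decay $2^{-\nu s}$, $s>n/2$, for annuli $C_\nu(B_j)$ with $\nu\geq\nu_0$, where $2^{\nu_0}\sim(1+|t|)^{p_0\sigma_{p_0}}$. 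There is no corresponding decay at scale $4B_j$, so your proposed per-$b_i$ bound $\|\chi_{X\setminus 4B_i}T_t b_i\|_{p_0}\lesssim (1+|t|)^{s_0}\|b_i\|_{p_0}$ cannot be obtained from the available off-diagonal estimates; moreover, summing such per-ball $L^{p_0}$ bounds would effectively reconstruct a strong $L^{p_0}$ bound for $T_t$ on the whole bad part, which is precisely what fails at the endpoint. In short, the $|t|$-growth must live in the \emph{measure of the exceptional set}, not in an $L^{p_0}$-norm bound of the bad part; once you enlarge the exceptional set by the dilation $(1+|t|)^{p_0\sigma_{p_0}}$ and split by ball size, you are led back to essentially the paper's argument.
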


We would like to mention that  when $L$
satisfies the  Gaussian estimates \eqref{GE},   Chen,   Duong,  Li, Song and Yan  \cite[Theorem 1.1]{CDLSY}   proved that
 the operator $(I+L)^{-{n/2}}e^{itL}  $ is of weak type
 $(1, 1)$. In their proof, it heavily  relies  on the following Plancherel-type estimate
\begin{align}\label{Plancherel}
\int_{X}|K_{e^{-(1-i\tau)R^{-m}L}}(x,y)|^{2}d(x,y)^{s}d\mu(y)\leq C\mu(B(x,1/R))^{-1}R^{-s}(1+|\tau|)^{s},
\end{align}
for some constant $C>0$ independent of $s\geq 0$, $R>0$ and $\tau\in \mathbb{R}$, where $K_{e^{-(1-i\tau)R^{-m}L}}(x,y)$
 denotes the integral kernel of the operators $e^{-(1-i\tau)R^{-m}L}$.  In our setting, we do not have such an estimate
 at our disposal.
To overcome this difficulty,
we  apply the Phragm\'{e}n-Lindel\"{o}f theorem to show that the generalized Gaussian estimates $({\rm GGE_{p_{0},p_{0}^{\prime},m}})$ implies the following $L^{p_{0}}-L^{2}$ off-diagonal estimates of the operator $e^{-(1-i\tau)R^{-1}L}$:
\begin{align*}
\|\chi_{C_{\nu}(B)}e^{-(1-i\tau)R^{-1}L}\chi_{B}\|_{p_{0}\rightarrow 2}\leq C\mu(B(x_{B},R^{-1/m}))^{-(\frac{1}{p_{0}}-\frac{1}{2})}{\rm exp}
\left(-c\Big(\frac{\sqrt[m]{R}2^\nu r}{1+|\tau|}\Big)^{\frac{m}{m-1}}\right),
\end{align*}
for some constant $C>0$ independent of $\tau\in\mathbb{R}, R>0$ and balls $B\subset X$ with center $x_{B}$ and radius $r$. This estimate is a suitable substitute of (\ref{Plancherel}) and it helps us deduce that for any $\frac{n}{2}<s<mM$, there exists a positive constant $C$ independent of $k>k_{0}$, $t>0$, $\nu\geq \nu_{0}$, and any ball $B$ with radius $r\sim 2^{k}$, such that
\begin{align}\label{key}
\|\chi_{C_{\nu}(B)}e^{itL}F_{k}(L)\chi_{B}f\|_{2}\leq C2^{-\nu s}\mu(B)^{-(\frac{1}{p_{0}}-\frac{1}{2})}(1+|t|)^{p_{0}(\frac{1}{p_{0}}-\frac{1}{2}) (s+\frac{n}{2})}\|f\|_{p_{0}},
\end{align}
where $F_{k}(L)=(I+L)^{-(\frac{1}{p_{0}}-\frac{1}{2}) n}(I-e^{-2^{mk}L})^{M}\varphi_{0}(2^{-m(k-k_{0})/(m-1)}L)$, and $\varphi_{0}$ is a smooth function with $\supp \varphi_{0}\subset[0,1]$, $\varphi_{0}(\lambda)=1$ on $[0,1/2]$, and $k_{0}\sim 2^{-1}p_{0}\log_{2}(1+|t|)$. This inequality plays a crucial role in obtaining the sharp growth $(1+|t|)^{(1-\frac{p_{0}}{2}) n}$ in the setting of homogeneous space(see Remark \ref{rremark}).

The paper is organized as follows. In section 2 we present some $L^{p_{0}}-L^{2}$ off-diagonal estimates for heat semigroups, resolvent and compactly supported spectral multipliers. In section 3, we apply the off-diagonal estimates obtained in section 2, combined with the Hardy-Littlewood maximal function and duality arguent, to show Theorem \ref{main}.

\medskip

\section{Preliminary results}
\setcounter{equation}{0}

{We now set some notations and common concepts to be used throughout the course of the paper.}
For $1\leq p \leq+\infty$, we denote the norm of a function $f\in L^{p}(X,d\mu)$ by $\|f\|_{p}$.
 We let $\langle\cdot,\cdot\rangle$ be the scalar product in $L^{2}(X,d\mu)$.
 If $T$ is a bounded linear operator from $L^{p}(X,d\mu)$ to $L^{q}(X,d\mu)$, $1\leq p,q\leq+\infty$,
 we write $\|T\|_{p\rightarrow q}$ for the operator norm of $T$. The indicator function of a subset $E\subseteq X$ is denoted by $\chi_{E}$.
Let $f$ be a tempered distribution, then the Fourier transform $\hat{f}$ is defined by
\begin{align*}
\hat{f}(\xi):=\frac{1}{(2\pi)^{n/2}}\int_{\mathbb{R}^{n}}f(x)e^{-ix\xi}dx, \ \ \xi\in\mathbb{R}^{n}.
\end{align*}
Next, let $B(x,r)=\{y\in X,\ d(x,y)<r\}$ be the open ball with center $x\in X$ and radius $r>0$. To simplify notation we often just use $B$ instead of $B(x,r)$. Let $V_{s}$ be a multiplier operator defined by $V_{s}f(x):=\mu(B(x,s))f(x)$, and
${\mathscr M}$ will be denoted by the Hardy-Littlewood maximal function.
Also, for $1\leq p_{0}\leq 2$, let $$\sigma_{p_{0}} :=\frac{1}{p_{0}}-\frac{1}{2}.$$
For simplicity, we write
$$
C_{1}(B):=2B\ \ \  {\rm and}\ \ \ C_{\nu}(B):=2^{\nu}B-2^{\nu-1} B, \ \ \nu=2, 3, \cdots.
$$

In this section we will prove  $L^{p_{0}}-L^{2}$ off-diagonal estimates
for resolvent and compactly spectral multipliers, which
play crucial roles in obtaining the sharp growth $(1+|t|)^{p_{0}\sigma_{p_{0}} n}$ for the operator
norm $\|e^{itL}(I+L)^{-\sigma_{p_{0}} n}\|_{L^{p_{0}}\rightarrow L^{p_{0},\infty}}$ in the setting of homogeneous space.
To begin with, we show the following  $L^{p_{0}}-L^{2}$ off-diagonal estimates for heat semigroups.

\begin{lemma}\label{real}
There exists a constant $C>0$ such that for any ball $B\subset X$ with center $x_{B}$ and radius $r$ and any $\lambda>0$, $\nu\in\mathbb{N}$, the following estimate holds:
\begin{align}\label{offreal}
\|\chi_{C_{\nu}(B)}e^{-\lambda L}\chi_{B}\|_{p_{0}\rightarrow 2}\leq C\mu(B(x_{B},\lambda^{1/m}))^{-\sigma_{p_{0}}}{\rm exp}
\left(-c\Big(\frac{d(C_{\nu}(B),B)}{\lambda^{1/m}}\Big)^{m/m-1}\right).
\end{align}
\end{lemma}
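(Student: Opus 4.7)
My plan is to derive the bound from $(\mathrm{GGE}_{p_0, p_0', m})$ in two moves: first upgrade the hypothesis to an $L^{p_0}\to L^2$ estimate at the natural scale $\lambda^{1/m}$, then sum this local estimate over a $\lambda^{1/m}$-scale covering of $B$ and $C_\nu(B)$.

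For the first move, fix $x,y\in X$ and write $\tilde B_x := B(x,\lambda^{1/m})$, $\tilde B_y := B(y,\lambda^{1/m})$. For any function $g$ supported in $\tilde B_x$, Hölder's inequality gives $\|g\|_2 \le \mu(\tilde B_x)^{1/2-1/p_0'}\|g\|_{p_0'} = \mu(\tilde B_x)^{\sigma_{p_0}}\|g\|_{p_0'}$. Applying this with $g = \chi_{\tilde B_x} e^{-\lambda L}\chi_{\tilde B_y} f$ and then the hypothesis $(\mathrm{GGE}_{p_0,p_0',m})$, whose exponent $\tfrac{1}{p_0}-\tfrac{1}{p_0'} = 2\sigma_{p_0}$ exactly cancels against the Hölder factor to leave a single $\sigma_{p_0}$, yields
\[
\|\chi_{\tilde B_x} e^{-\lambda L}\chi_{\tilde B_y}\|_{p_0\to 2} \le C\,\mu(\tilde B_x)^{-\sigma_{p_0}}\exp\bigl(-c(d(x,y)^m/\lambda)^{1/(m-1)}\bigr).
\]

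For the second move, a Vitali-type selection in $(X,d,\mu)$ provides $\lambda^{1/m}/5$-separated centers $\{x_i\}\subset B$ and $\{x_j'\}\subset C_\nu(B)$ whose enlargements $B_i := B(x_i,\lambda^{1/m})$ and $B_j' := B(x_j',\lambda^{1/m})$ cover $B$ and $C_\nu(B)$ with bounded overlap. Pick associated measurable disjoint partitions $B=\bigcup_i E_i$ with $E_i\subset B_i$ and $C_\nu(B)=\bigcup_j F_j$ with $F_j\subset B_j'$; setting $f_i := \chi_{E_i} f$ so that $\sum_i\|f_i\|_{p_0}^{p_0}=\|f\|_{p_0}^{p_0}$, the disjointness of the $F_j$ together with the triangle inequality gives
\[
\|\chi_{C_\nu(B)} e^{-\lambda L} f\|_2^2 = \sum_j\|\chi_{F_j} e^{-\lambda L} f\|_2^2 \le \sum_j\Bigl(\sum_i \|\chi_{B_j'} e^{-\lambda L}\chi_{B_i} f_i\|_2\Bigr)^2,
\]
and each inner term is now controlled by the localized bound above.

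To finish, for $\nu\ge 2$ the triangle inequality gives $d(x_i, x_j')\ge c\,2^\nu r$, so the elementary inequality $a^{m/(m-1)} \ge \tfrac{1}{2} a^{m/(m-1)} + \tfrac{1}{2} b^{m/(m-1)}$ (for $a\ge b\ge 0$) splits the local exponential into the desired common factor $\exp(-c(2^\nu r/\lambda^{1/m})^{m/(m-1)})$ times a residual weight $\exp(-c'(d(x_i, x_j')/\lambda^{1/m})^{m/(m-1)})$; doubling bounds the number of centers in each dyadic distance shell, which together with Cauchy--Schwarz and the embedding $\ell^{p_0}\hookrightarrow\ell^2$ (valid since $p_0\le 2$) reduces the double sum of residual weights to a constant multiple of $\|f\|_{p_0}^2$. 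The case $\nu=1$ is identical, with $d(C_1(B),B)=0$ and the exponential factor equal to $1$. The main obstacle will be the bookkeeping in this final assembly: several polynomial losses arise (from comparing $\mu(B_j')$ with $\mu(B(x_B,\lambda^{1/m}))$ via doubling with factor $(1+2^\nu r/\lambda^{1/m})^{n\sigma_{p_0}}$, from counting centers per distance shell, and from the outer $\sum_j$), each of which must be absorbed into the exponential using the fact that $m/(m-1)>1$ at the cost of a slightly smaller $c$; a secondary technicality is the degenerate regime $r\le\lambda^{1/m}$, where the covering of $B$ reduces to a single natural-scale ball centered at $x_B$.
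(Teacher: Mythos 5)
Your proposal is correct in broad outline but takes a genuinely different route from the paper. The paper's proof is extremely short: it cites Blunck--Kunstmann \cite[Theorem 1.2]{Blunck1}, which already converts $({\rm GGE}_{p_0,p_0',m})$ into a ``two--ball'' estimate
$\|\chi_{B_1}V_{\lambda^{1/m}}^{a}e^{-\lambda L}V_{\lambda^{1/m}}^{b}\chi_{B_2}\|_{p_0\to 2}\le C\exp\bigl(-c(d(B_1,B_2)/\lambda^{1/m})^{m/(m-1)}\bigr)$ for balls $B_1,B_2$ of \emph{arbitrary} radius with $a+b=\sigma_{p_0}$, and then covers $C_\nu(B)$ by a bounded number $c_n$ of balls of radius $\sim 2^\nu r$. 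You instead work directly from the raw GGE hypothesis: you localize to balls of radius $\lambda^{1/m}$ via H\"older, cover $B$ and $C_\nu(B)$ with a $\lambda^{1/m}$-scale Vitali net (potentially $\sim(2^\nu r/\lambda^{1/m})^{n}$ balls), and reassemble with a Schur-test argument. In effect you are re-proving the relevant half of Blunck--Kunstmann's reduction. The trade-off is clear: your route is more self-contained, but the paper's is dramatically shorter because (a) only $O(1)$ covering balls are needed, and (b) the local-volume multiplier $V_{\lambda^{1/m}}$, which is a function of the variable and not a fixed constant, absorbs the center-to-center volume variation exactly, so no Schur test, no shell counting, and no doubling losses have to be managed.

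There is, however, a genuine gap in your scheme at $\nu=1$ (and more generally whenever $d(C_\nu(B),B)=0$). Your mechanism for killing the polynomial loss $(1+2^\nu r/\lambda^{1/m})^{n\sigma_{p_0}}$ coming from the doubling comparison $\mu(B_j')^{-\sigma_{p_0}}\lesssim (1+2^\nu r/\lambda^{1/m})^{n\sigma_{p_0}}\mu(B(x_B,\lambda^{1/m}))^{-\sigma_{p_0}}$ is precisely the exponential factor $\exp\bigl(-c(2^\nu r/\lambda^{1/m})^{m/(m-1)}\bigr)$, which you correctly observe dominates any power of $2^\nu r/\lambda^{1/m}$. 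But for $\nu=1$ that exponential factor is $1$, and when $r\gg\lambda^{1/m}$ the polynomial loss is unbounded, so the claim that ``the case $\nu=1$ is identical'' does not hold: there is nothing to absorb it. This is exactly where the $V_{\lambda^{1/m}}$-weight in Blunck--Kunstmann's two-ball estimate earns its keep --- by weighting with the local volume $\mu(B(x,\lambda^{1/m}))$ rather than the fixed constant $\mu(B(x_B,\lambda^{1/m}))$, it never incurs the comparison you are forced to make. To close your argument you would need either to reproduce that weighted formulation (essentially replacing your constants $\mu(B_j')^{-\sigma_{p_0}}$ by the multiplier $V_{\lambda^{1/m}}^{-\sigma_{p_0}}$ throughout and only passing to a fixed constant at the very end in a way that matches how the lemma is later consumed, e.g.\ in Proposition \ref{offyujieshi} where an explicit factor $(1+r/\lambda^{1/m})^{\sigma_{p_0}n}$ is carried along), or to restrict the claim to $\nu\ge 2$ and handle $\nu=1$ separately.
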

\begin{proof}
The proof was essentially proved in \cite[Theorem 1.2]{Blunck1}.
We give a brief argument of this proof for completeness and the convenience of readers.

By \cite[Theorem 1.2]{Blunck1}, property $({\rm GGE}_{p_{0},p_{0}^{\prime},m})$ implies the following two ball estimate:
\begin{align*}
\|\chi_{B_{1}}V_{\lambda^{1/m}}^{a}e^{-\lambda L}V_{\lambda^{1/m}}^{b}\chi_{B_{2}}\|_{p_{0}\rightarrow 2}\leq C{\rm exp}\bigg(-c\Big(\frac{d(B_{1},B_{2})}{\lambda^{1/m}}\Big)^{\frac{m}{m-1}}\bigg), \ {\rm for\ any\ balls} \ B_{1},\ B_{2},
\end{align*}
where $a,b\geq 0$ such that $a+b=\sigma_{p_{0}}$. Therefore, estimate (\ref{offreal}) holds for $\nu=0$.
For any $\nu\geq 1$, we note that there exist $c_{n}$ balls $\{B_{\nu}^{(j)}\}_{j=1}^{c_{n}}$
such that $C_{\nu}(B)\subset \mathop{\cup}\limits_{j=1}^{c_{n}}B_{\nu}^{(j)}$ and $d(C_{\nu}(B),B)\sim d(B_{\nu}^{(j)},B)$, for all $ j=1,2\ldots,c_{n}$. Hence
\begin{align*}
\|\chi_{C_{\nu}(B)}e^{-\lambda L}\chi_{B}\|_{p_{0}\rightarrow 2}
&\leq \mu(B(x_{B},\lambda^{1/m}))^{-\sigma_{p_{0}}}
\sum_{j=1}^{c_{n}}\|\chi_{B_{\nu}^{(j)}}e^{-\lambda L}V_{\lambda^{1/m}}^{\sigma_{p_{0}}}\chi_{B}\|_{p_{0}\rightarrow 2}\\
&\leq C\mu(B(x_{B},\lambda^{1/m}))^{-\sigma_{p_{0}}}{\rm exp}\bigg(-c\Big(\frac{d(C_{\nu}(B),B)}{\lambda^{1/m}}\Big)^{\frac{m}{m-1}}\bigg).
\end{align*}

\end{proof}
\subsection{$L^{p_{0}}-L^{2}$ off-diagonal estimates for resolvent}

\begin{proposition}\label{offyujieshi}
For any $N\in \mathbb{N}$, there exists a positive constant $C$ such that for any $\nu\geq 2$,
\begin{align}\label{m1}
\|\chi_{C_{\nu}(B)}(I+L)^{-\sigma_{p_{0}} n}(I-e^{-r^{m}L})^{M}\chi_{B}\|_{p_{0}\rightarrow 2}\leq C2^{-\nu N}\mu(B)^{-\sigma_{p_{0}}},
\end{align}
and there exists a positive constant $C$ such that
\begin{align}\label{m00}
\|\chi_{2B}(I+L)^{-\sigma_{p_{0}} n}(I-e^{-r^{m}L})^{M}\chi_{2B}\|_{p_{0}\rightarrow 2}\leq C\max\{1,r^{\sigma_{p_{0}} n}\}\mu(B)^{-\sigma_{p_{0}}},
\end{align}
where $M$ is a fixed parameter chosen to be bigger than $\frac{N}{m}$ and the constant $C$ is independent of $B=B(x_{B},r)$.
\end{proposition}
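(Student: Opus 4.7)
My plan is to combine the subordination formula
$(I+L)^{-\sigma_{p_0}n}=\frac{1}{\Gamma(\sigma_{p_0}n)}\int_0^\infty u^{\sigma_{p_0}n-1}e^{-u}e^{-uL}\,du$
with the binomial expansion
$(I-e^{-r^mL})^M=\sum_{k=0}^M(-1)^k\binom{M}{k}e^{-kr^mL}$,
and then apply Lemma \ref{real} with $\lambda=u+kr^m$ to each sandwiched piece $\chi_{C_\nu(B)}e^{-(u+kr^m)L}\chi_B$. The problem reduces to estimating, for each $k=0,\ldots,M$,
$$
\mathcal{I}_k := \int_0^\infty u^{\sigma_{p_0}n-1}e^{-u}\,\mu(B(x_B,(u+kr^m)^{1/m}))^{-\sigma_{p_0}}\exp\!\Big(-c\big(\tfrac{2^\nu r}{(u+kr^m)^{1/m}}\big)^{m/(m-1)}\Big)\,du,
$$
and the two claims then follow from different analyses of this integral.

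For \eqref{m00} one discards the exponential factor. When $k\geq 1$, the shift forces $(u+kr^m)^{1/m}\geq r$, so doubling gives $\mu(B(x_B,(u+kr^m)^{1/m}))^{-\sigma_{p_0}}\leq C\mu(B)^{-\sigma_{p_0}}$, hence $\mathcal{I}_k\leq C\mu(B)^{-\sigma_{p_0}}$. For $k=0$, split the $u$-integration at $u=r^m$: the tail $u\geq r^m$ is handled as above, while on $u<r^m$ the doubling estimate $\mu(B(x_B,u^{1/m}))^{-\sigma_{p_0}}\leq C(r/u^{1/m})^{\sigma_{p_0}n}\mu(B)^{-\sigma_{p_0}}$ together with a case analysis on $r\lessgtr 1$ (exploiting $e^{-u}$-decay when $r\geq 1$) produces the factor $\max\{1,r^{\sigma_{p_0}n}\}$. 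For \eqref{m1} with $\nu\geq 2$, I use the elementary bound $\exp(-cy^{m/(m-1)})\leq C_A y^{-A}$ valid for $y\geq 1$ and any $A>0$ to extract a factor $C_A(u+kr^m)^{A/m}(2^\nu r)^{-A}$ whenever $(u+kr^m)^{1/m}\leq 2^\nu r$; in the complementary regime the factor $e^{-u}\leq e^{-(2^\nu r)^m}$ supplies super-exponential decay in $\nu$. Splitting the $u$-integral at $u=r^m$ and $u=(2^\nu r)^m$ and applying the doubling bound for the volume on each piece, one balances the $r$-powers (from the doubling factor and from $(2^\nu r)^{-A}$) through a judicious choice of $A$ to obtain $\mathcal{I}_k\leq C_N 2^{-\nu N}\mu(B)^{-\sigma_{p_0}}$.

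The main obstacle in \eqref{m1} is ensuring the bound is uniform in $r$, particularly when $r\ll 2^{-\nu}$: in this regime Lemma \ref{real} provides no useful off-diagonal decay at the dominant time scale $u\sim 1$, and the naive estimate leaves a residual power of $r$ that is not by itself absorbed by $(2^\nu r)^{-A}$. The resolution uses the order-$M$ vanishing of $(1-e^{-r^m\lambda})^M$ at $\lambda=0$: interpreting the binomial sum as an $M$-th forward difference (at spacing $r^m$) applied to the subordination density exposes an additional factor $(r^m)^M$. Combined with the polynomial off-diagonal bound of order $A\leq mM$, this restores the required $2^{-\nu N}$ decay uniformly in $r$ provided $N\leq mM$, which is precisely the hypothesis $M>N/m$.
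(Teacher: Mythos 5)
Your proposal is essentially the paper's argument, just organized a bit differently at the outset. The paper combines the subordination formula with the binomial expansion into a single function $g_{r^m}(\lambda)=\sum_{\ell=0}^M C_M^\ell(-1)^\ell\chi_{\{\lambda>\ell r^m\}}(\lambda)(\lambda-\ell r^m)^{\sigma_{p_0}n-1}e^{-(\lambda-\ell r^m)}$, which is exactly the $M$-th forward difference (at spacing $r^m$) of the subordination density that your final paragraph identifies, and records the crucial bound $|g_{r^m}(\lambda)|\leq Cr^{mM}\lambda^{\sigma_{p_0}n-1-M}e^{-\lambda/(2(M+1))}$ for $\lambda\geq(M+1)r^m$ --- this is precisely the $(r^m)^M$ gain you point out is needed in the small-$r$ regime. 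The one thing to flag is that, as you yourself observe at the end, the intermediate reduction ``to estimating each $\mathcal{I}_k$ separately'' is not tenable for \eqref{m1}: the $k$-th term alone gives no decay in $\nu$ once $r\ll 2^{-\nu}$, so the proof must carry the combined difference $g_{r^m}$ throughout rather than bound term by term, which is how the paper sets things up from the start.
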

\begin{proof}
Note that
$$(I+L)^{-\sigma_{p_{0}} n}=\frac{1}{\Gamma (\sigma_{p_{0}} n)}\int_{0}^{\infty}e^{-\lambda L}e^{-\lambda}\lambda^{\sigma_{p_{0}} n-1}d\lambda.
$$
From it we use the change of variables to obtain
\begin{align}\label{yujie}
(I+L)^{-\sigma_{p_{0}} n}(I-e^{-r^{m}L})^{M}
&=\frac{1}{\Gamma (\sigma_{p_{0}} n)}\int_{0}^{\infty}e^{-\lambda L}(I-e^{-r^{m}L})^{M}e^{-\lambda}\lambda^{\sigma_{p_{0}} n-1}d\lambda\nonumber\\
&=\frac{1}{\Gamma (\sigma_{p_{0}} n)}\int_{0}^{\infty}g_{r^{m}}(\lambda)e^{-\lambda L}d\lambda,
\end{align}
where
\begin{align}\label{gfunction}
g_{s}(\lambda)=\sum_{\ell=0}^{M}C_{M}^{\ell}(-1)^{\ell}\chi_{\{\lambda>\ell s\}}(\lambda)(\lambda-\ell s)^{\sigma_{p_{0}} n-1}e^{-(\lambda-\ell s)}.
\end{align}
Then, it can be verified that
\begin{eqnarray*}\label{mm} \label{1cm}
|g_{r^{m}}(\lambda)|\leq
\left
\{
\begin{array}{ll}
C\lambda^{\sigma_{p_{0}} n-1}e^{-\lambda}, &0<\lambda<r^{m},\\[4pt]
C(r^{m})^{\sigma_{p_{0}} n-1}e^{-r^{m}}+(\lambda-\beta r^{m})^{\sigma_{p_{0}} n-1}e^{-(\lambda-\beta r^{m})},
&\beta r^{m}\leq \lambda< (\beta+1)r^{m}, 1\leq \beta\leq M, \\[4pt]
Cr^{mM}\lambda^{\sigma_{p_{0}} n-1-M}e^{-\frac{\lambda}{2(M+1)}}, &\lambda\geq (M+1)r^{m},\end{array}\right.
\end{eqnarray*}




Now let us prove (\ref{m1}). By the formula (\ref{yujie}), Lemma \ref{real} and doubling condition (\ref{doubling}),
 we get
\begin{eqnarray*}
&&\hspace{-1.2cm}\left\|\chi_{C_{\nu}(B)}(I+L)^{-\sigma_{p_{0}} n}(I-e^{-r^{m}L})^{M}\chi_{B}\right\|_{p_{0}\rightarrow 2}\\
&\leq& C\mu(B)^{-\sigma_{p_{0}} }
 \int_{0}^{\infty}|g_{r^{m}}(\lambda)|\left(1+\frac{r}{\lambda^{1/m}}\right)^{\sigma_{p_{0}} n}
 {\rm exp}\left(-c\Big(\frac{2^\nu r}{\lambda^{1/m}}\Big)^{\frac{m}{m-1}}\right)d\lambda\\
&=& C\mu(B)^{-\sigma_{p_{0}} }\left(\int_{0}^{r^{m}}+ \int_{r^{m}}^{(M+1)r^{m}}+\int_{(M+1)r^{m}}^{\infty}\right) |g_{r^{m}}(\lambda)|\left(1+\frac{r}{\lambda^{1/m}}\right)^{\sigma_{p_{0}} n}
 {\rm exp}\left(-c\Big(\frac{2^\nu r}{\lambda^{1/m}}\Big)^{\frac{m}{m-1}}\right) d\lambda
\\
& =:&I+II+III.
\end{eqnarray*}
For the term $I,$ we use the property of $ g_m $ to obtain
\begin{align}\label{I}
I
&\leq C\mu(B)^{-\sigma_{p_{0}} }\int_{0}^{r^{m}}\lambda^{\sigma_{p_{0}} n-1}e^{-\lambda}\Big(1+\frac{r}{\lambda^{1/m}}\Big)^{\sigma_{p_{0}} n}{\rm exp}\bigg(-c\Big(\frac{2^\nu r}{\lambda^{1/m}}\Big)^{\frac{m}{m-1}}\bigg)d\lambda\nonumber\\
&\leq C2^{-\nu N}\mu(B)^{-\sigma_{p_{0}}}\int_{0}^{1}\lambda^{-1+\frac{N-\sigma_{p_{0}} n}{m}}d\lambda \nonumber\\
 &\leq C2^{-\nu N}\mu(B)^{-\sigma_{p_{0}}},
\end{align}
for some large  $N>\sigma_{p_{0}} n$.  For the term $II$, we have
\begin{align}\label{II}
II
&\leq C\mu(B)^{-\sigma_{p_{0}} }\sum_{\ell=1}^{M}\int_{\ell r^{m}}^{(\ell+1)r^{m}}(r^{m})^{\sigma_{p_{0}} n-1}e^{-r^{m}}\Big(1+\frac{r}{\lambda^{1/m}}\Big)^{\sigma_{p_{0}} n}{\rm exp}\bigg(-c\Big(\frac{2^\nu r}{\lambda^{1/m}}\Big)^{\frac{m}{m-1}}\bigg)d\lambda\nonumber\\
&+C\mu(B)^{-\sigma_{p_{0}} }\sum_{\ell=1}^{M}\int_{\ell r^{m}}^{(\ell+1)r^{m}}(\lambda-\ell r^{m})^{\sigma_{p_{0}} n-1}e^{-(\lambda-\ell r^{m})}\Big(1+\frac{r}{\lambda^{1/m}}\Big)^{\sigma_{p_{0}} n}{\rm exp}\bigg(-c\Big(\frac{2^\nu r}{\lambda^{1/m}}\Big)^{\frac{m}{m-1}}\bigg)d\lambda\nonumber\\
&\leq C2^{-\nu N}\mu(B)^{-\sigma_{p_{0}}}.
\end{align}
Consider  the term $III$. Since $M>\frac{N}{m}$, we conclude that
\begin{align*}
III
&\leq C\mu(B)^{-\sigma_{p_{0}}}\int_{(M+1)r^{m}}^{\infty}r^{mM}\lambda^{\sigma_{p_{0}} n-1-M}e^{-\frac{\lambda}{2(M+1)}}\Big(1+\frac{r}{\lambda^{1/m}}\Big)^{\sigma_{p_{0}} n}{\rm exp}\bigg(-c\Big(\frac{2^\nu r}{\lambda^{1/m}}\Big)^{\frac{m}{m-1}}\bigg)d\lambda\\
&\leq C2^{-\nu N}\mu(B)^{-\sigma_{p_{0}}}\int_{1}^{\infty}\lambda^{-1-M+\frac{N}{m}}d\lambda\\
&\leq C2^{-\nu N}\mu(B)^{-\sigma_{p_{0}}}.
\end{align*}
This, in combination with  estimates (\ref{I}) and (\ref{II}),  shows  the desired estimate (\ref{m1}).

To show estimate (\ref{m00}), we see that
\begin{eqnarray*}
&&\hspace{-1.2cm}\left\|\chi_{2B}(I+L)^{-\sigma_{p_{0}} n}(I-e^{-r^{m}L})^{M}\chi_{2B}\right\|_{p_{0}\rightarrow 2}\\
&\leq& C\mu(B)^{-\sigma_{p_{0}} }
 \int_{0}^{\infty}|g_{r^{m}}(\lambda)|\left(1+\frac{r}{\lambda^{1/m}}\right)^{\sigma_{p_{0}} n}d\lambda\\
&=& C\mu(B)^{-\sigma_{p_{0}} }\left(\int_{0}^{r^{m}}+ \int_{r^{m}}^{(M+1)r^{m}}+\int_{(M+1)r^{m}}^{\infty}\right) |g_{r^{m}}(\lambda)|\left(1+\frac{r}{\lambda^{1/m}}\right)^{\sigma_{p_{0}} n} d\lambda.
\end{eqnarray*}
Then, we can deduce that the last two parts are no larger than $C\mu(B)^{-\sigma_{p_{0}}}$ by the same way.
 Also, It can be shown by a simple modification of the estimate of $III$ that the first part can be
 bounded by $C\max\{1,r^{\sigma_{p_{0}} n}\}\mu(B)^{-\sigma_{p_{0}}}$. We omit the details  and leave it to the readers.
%

\end{proof}
\subsection{$L^{p_{0}}-L^{2}$ off-diagonal estimates for compactly supported spectral multipliers}
To begin with, we state the following version of Phragmen-Lindel\"{o}f Theorem.
\begin{lemma}\label{PL0}
Suppose that function $F$ is analytic in $\mathbb{C}_{+}:=\{z\in\mathbb{C}:{\rm Rez}>0\}$ and that
\begin{align*}
&|F(z)|\leq a_{1}({\rm Re}z)^{-\beta_{1}},\\
&|F(\lambda)|\leq a_{1}t^{-\beta_{1}}{\rm exp}(-a_{2}t^{-\beta_{2}}),
\end{align*}
for some $a_{1}$,$a_{2}>0$, $\beta_{1}\geq 0$, $\beta_{2}\in(0,1]$, all $t>0$ and $z\in\mathbb{C}_{+}$. Then
\begin{align*}
|F(z)|\leq a_{1}2^{\beta_{1}}({\rm Re}z)^{-\beta_{1}}{\rm exp}\bigg(-\frac{a_{2}\beta_{2}}{2}|z|^{-\beta_{2}-1}{\rm Re}z\bigg)
\end{align*}
for all $z\in\mathbb{C}_{+}$.
\end{lemma}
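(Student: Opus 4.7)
The plan is to reduce the claimed bound to a cleaner form involving $\operatorname{Re}(z^{-\beta_2})$, introduce an auxiliary analytic function that absorbs half of the exponential decay on the positive real axis, and then apply a Phragm\'en--Lindel\"of argument to a subharmonic companion of that function in each quadrant of $\mathbb{C}_+$.

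First I would perform an elementary reduction. For $z=|z|e^{i\theta}$ with $|\theta|<\pi/2$, monotonicity of cosine on $[0,\pi/2]$ together with $\beta_2\in(0,1]$ yields $\cos(\beta_2\theta)\geq\cos\theta\geq\beta_2\cos\theta$, hence
\begin{equation*}
\operatorname{Re}(z^{-\beta_2})=\frac{\cos(\beta_2\theta)}{|z|^{\beta_2}}\geq \beta_2\,|z|^{-\beta_2-1}\operatorname{Re} z.
\end{equation*}
It therefore suffices to prove the stronger bound
\begin{equation*}
|F(z)|\leq a_1 2^{\beta_1}(\operatorname{Re} z)^{-\beta_1}\exp\left(-\tfrac{a_2}{2}\operatorname{Re}(z^{-\beta_2})\right),\qquad z\in\mathbb{C}_+.
\end{equation*}

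Next I would introduce $G(z):=F(z)\exp(\tfrac{a_2}{2}z^{-\beta_2})$, analytic on $\mathbb{C}_+$ via the principal branch of $z^{-\beta_2}$. The second hypothesis becomes
\begin{equation*}
|G(t)|\leq a_1 t^{-\beta_1}\exp\left(-\tfrac{a_2}{2}t^{-\beta_2}\right)\leq a_1 t^{-\beta_1},\qquad t>0,
\end{equation*}
while the first hypothesis becomes $|G(z)|\leq a_1(\operatorname{Re} z)^{-\beta_1}\exp(\tfrac{a_2}{2}\operatorname{Re}(z^{-\beta_2}))$ on $\mathbb{C}_+$. The reduced target is simply $|G(z)|\leq a_1 2^{\beta_1}(\operatorname{Re} z)^{-\beta_1}$ throughout $\mathbb{C}_+$.

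The main step, and what I expect to be the hard part, is to establish this reduced bound via Phragm\'en--Lindel\"of applied to the subharmonic function $u(z):=\log|G(z)|+\beta_1\log(\operatorname{Re} z)-\log a_1$. Splitting $\mathbb{C}_+$ into the two open quadrants $Q_\pm:=\{z\in\mathbb{C}_+:\pm\operatorname{Im} z>0\}$, one has $u(t)\leq 0$ on their common boundary (the positive real axis), while inside each quadrant the growth bound $u(z)\leq \tfrac{a_2}{2}\operatorname{Re}(z^{-\beta_2})$ is uniformly bounded as $|z|\to\infty$ and grows at most like $|z|^{-\beta_2}$ as $z\to 0$. The obstacle is the absence of a priori control on $u$ along the other (imaginary-axis) boundary of each quadrant and near the origin; I would handle this by a standard regularization --- multiplying $G$ by a factor $(1+\epsilon z)^{-A}$ with $A$ large and excising a small disk about the origin, applying the classical sectorial Phragm\'en--Lindel\"of principle in the truncated quadrant, and then letting $\epsilon\to 0^+$ --- to conclude $u\leq \beta_1\log 2$ in $\mathbb{C}_+$. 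Combining this with the preliminary reduction yields the claimed inequality.
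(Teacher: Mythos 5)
The paper itself does not supply a proof of this lemma --- it only cites \cite[Lemma 9]{Davies} --- so there is no internal argument to compare against. Your proposal, however, breaks at its very first step. The reduction ``it therefore suffices to prove the stronger bound $|F(z)|\le a_12^{\beta_1}(\operatorname{Re} z)^{-\beta_1}\exp(-\tfrac{a_2}{2}\operatorname{Re}(z^{-\beta_2}))$'' is logically valid (indeed $\operatorname{Re}(z^{-\beta_2})\ge\beta_2|z|^{-\beta_2-1}\operatorname{Re} z$ holds), but the stronger statement you are now committed to proving is \emph{false} when $\beta_2<1$. Take $\beta_1=0$, $a_1=a_2=1$, $\beta_2=\tfrac12$, and $F(z)=\exp(-\sqrt{2}\,e^{-i\pi/4}z^{-1/2})$. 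Writing $z=re^{i\theta}$, $|\theta|<\pi/2$, one has $|F(z)|=\exp(-\sqrt{2}\,r^{-1/2}\cos(\pi/4+\theta/2))\le 1$ and $|F(t)|=\exp(-t^{-1/2})$, so both hypotheses are satisfied. Yet as $z\to i$ through $\mathbb{C}_+$ one gets $|F(z)|\to 1$, whereas your target would force $|F(z)|\le\exp(-\tfrac12\operatorname{Re}(z^{-1/2}))\to e^{-\sqrt2/4}<1$. The lemma's conclusion survives because its angular factor $\beta_2\cos\theta$ vanishes as $\theta\to\pm\pi/2$, while your replacement $\tfrac12\cos(\beta_2\theta)$ stays bounded away from zero for $\beta_2<1$; the inequality you invoke runs in the wrong direction for a sufficiency reduction.

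Independently, the Phragm\'en--Lindel\"of step as sketched would not close even if the reduced target were true. After forming $u$, you only control $u\le 0$ on the positive real axis; on the imaginary-axis side of each quadrant the only available bound is $u\le\tfrac{a_2}{2}\operatorname{Re}(z^{-\beta_2})$, which blows up like $\tfrac{a_2}{2}\cos(\beta_2\pi/2)\,|z|^{-\beta_2}$ near the origin when $\beta_2<1$. The proposed regularization does not address this: $(1+\epsilon z)^{-A}$ only dampens growth at infinity, and excising a small disk about $0$ merely transfers the unbounded boundary data onto the arc, where it diverges as the disk shrinks. A working argument must instead use the real-axis decay on one ray of a genuine sub-sector $\{0\le\arg z\le\theta_1\}$, $\theta_1<\pi/2$, and compare $F$ against $\exp(c\,e^{i\delta}z^{-\beta_2})$ with the phase $\delta$ chosen so that the auxiliary factor is simultaneously bounded on the second ray; the loss incurred by optimizing over $(\theta_1,\delta)$ is precisely what produces Davies' angular factor $\beta_2\cos\theta$ and the constant $2^{\beta_1}$.
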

\begin{proof}
For the proof, we refer it to \cite[Lemma 9]{Davies}.
\end{proof}

By Lemma \ref{PL0}, we can modify the argument from \cite[Lemma 3.3]{hardy} to extend the off-diagonal estimate (\ref{offreal}) from real times $t>0$ to complex times $z=-(1-i\tau)R^{-1}$ for any $R>0$.
\begin{lemma}\label{tt1}
There exists a constant $C>0$ such that for any ball $B\subset X$ with center $x_{B}$ and radius $r$, integer $\nu\geq 2$, real number $R>0$, the following estimate holds:
\begin{align}\label{t1}
\|\chi_{C_{\nu}(B)}e^{-(1-i\tau)R^{-1}L}\chi_{B}\|_{p_{0}\rightarrow 2}\leq C\mu(B(x_{B},R^{-1/m}))^{-\sigma_{p_{0}}}{\rm exp}\bigg(-c\Big(\frac{\sqrt[m]{R}2^\nu r}{1+|\tau|}\Big)^{\frac{m}{m-1}}\bigg).
\end{align}
\end{lemma}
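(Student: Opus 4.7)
The plan is to apply the Phragm\'en--Lindel\"of theorem (Lemma \ref{PL0}) to extend the real-time off-diagonal estimate of Lemma \ref{real} to complex times. First I would fix $f\in L^{p_{0}}(X)$ supported in $B$ with $\|f\|_{p_{0}}=1$ and $g\in L^{2}(X)$ supported in $C_{\nu}(B)$ with $\|g\|_{2}=1$, and set
\begin{equation*}
F(\zeta):=\langle e^{-(\zeta/R)L}f,g\rangle,\qquad \zeta\in\mathbb{C}_{+}.
\end{equation*}
By the spectral theorem, $F$ is holomorphic on $\mathbb{C}_{+}$, so the target operator-norm bound will follow by taking the supremum over admissible $f,g$ of a pointwise bound on $F$ at $\zeta=1-i\tau$.

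For the sharp positive-real-axis estimate, Lemma \ref{real} applied with $\lambda=t/R$ and $d(C_{\nu}(B),B)\sim 2^{\nu}r$ yields
\begin{equation*}
|F(t)|\leq C\,\mu(B(x_{B},(t/R)^{1/m}))^{-\sigma_{p_{0}}}\exp\bigl(-c\,(2^{\nu}rR^{1/m})^{m/(m-1)}\,t^{-1/(m-1)}\bigr).
\end{equation*}
The doubling property bounds $\mu(B(x_{B},(t/R)^{1/m}))^{-\sigma_{p_{0}}}$ by $C\max(1,t^{-n\sigma_{p_{0}}/m})\,\mu(B(x_{B},R^{-1/m}))^{-\sigma_{p_{0}}}$, and the polynomial factor $t^{-n\sigma_{p_{0}}/m}$ is dominated by the Gaussian tail as $t\to 0^{+}$ and by a constant for $t\geq 1$. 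This recasts the real-axis bound in the template $|F(t)|\leq a_{1}t^{-\beta_{1}}\exp(-a_{2}t^{-\beta_{2}})$ required by Lemma \ref{PL0}, with $a_{1}=C\mu(B(x_{B},R^{-1/m}))^{-\sigma_{p_{0}}}$, $\beta_{1}=n\sigma_{p_{0}}/m$, $\beta_{2}=1/(m-1)\in(0,1]$ and $a_{2}=c(2^{\nu}rR^{1/m})^{m/(m-1)}$.

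For the half-plane hypothesis $|F(\zeta)|\leq a_{1}(\operatorname{Re}\zeta)^{-\beta_{1}}$, I would decompose $e^{-(\zeta/R)L}=e^{-((\zeta-s/2)/R)L}\,e^{-(s/(2R))L}$ with $s=\operatorname{Re}\zeta$; the outer factor has positive real part and hence $L^{2}$-operator norm at most one by the spectral theorem, while the inner factor applied to $\chi_{B}f$ is controlled via the global estimate $\|e^{-(s/(2R))L}\chi_{B}\|_{p_{0}\to 2}\leq C\mu(B(x_{B},(s/R)^{1/m}))^{-\sigma_{p_{0}}}$, obtained by summing Lemma \ref{real} over the dyadic annuli $\{C_{\nu}(B)\}_{\nu\geq 0}$. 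A further application of doubling then converts this into the required form $a_{1}(\operatorname{Re}\zeta)^{-\beta_{1}}$ up to absolute constants.

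Feeding these two bounds into Lemma \ref{PL0} yields
\begin{equation*}
|F(\zeta)|\leq 2^{\beta_{1}}a_{1}(\operatorname{Re}\zeta)^{-\beta_{1}}\exp\!\Bigl(-\tfrac{a_{2}\beta_{2}}{2}|\zeta|^{-\beta_{2}-1}\operatorname{Re}\zeta\Bigr),
\end{equation*}
and specializing to $\zeta=1-i\tau$ (where $\operatorname{Re}\zeta=1$ and $|\zeta|^{-\beta_{2}-1}\geq (1+|\tau|)^{-m/(m-1)}$) collapses the exponent into $-c(\sqrt[m]{R}\,2^{\nu}r/(1+|\tau|))^{m/(m-1)}$, producing the target inequality after taking the supremum over $f,g$. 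I expect the hardest part to be the half-plane bound in the third paragraph: the natural estimate produces $\mu(B(x_{B},(\operatorname{Re}\zeta/R)^{1/m}))^{-\sigma_{p_{0}}}$ rather than directly $(\operatorname{Re}\zeta)^{-\beta_{1}}$, and the doubling-based comparison between the two is clean only when $\operatorname{Re}\zeta$ is of unit order, so a sub-strip variant of Phragm\'en--Lindel\"of or a modification of $F$ by an analytic multiplier may be needed to genuinely meet the global hypothesis of Lemma \ref{PL0}.
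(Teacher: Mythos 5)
Your overall strategy (extend the real-time estimate of Lemma~\ref{real} to the half-plane via the Phragm\'en--Lindel\"of Lemma~\ref{PL0} with a duality test against $f\in L^{p_0}(B)$, $g\in L^2(C_\nu(B))$) is the same as the paper's, but there is a concrete gap in how you set up the analytic function, and it is precisely the issue you flag at the end.

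You define $F(\zeta)=\langle e^{-(\zeta/R)L}f,g\rangle$ with no analytic damping factor. With this definition, neither hypothesis of Lemma~\ref{PL0} is actually verified for large ${\rm Re}\,\zeta$. On the positive real axis you correctly reduce to
$|F(t)|\leq C a_1\max(1,t^{-\beta_1})\exp(-a_2 t^{-\beta_2})$ with $a_1=\mu(B(x_B,R^{-1/m}))^{-\sigma_{p_0}}$, but the required template is $a_1 t^{-\beta_1}\exp(-a_2 t^{-\beta_2})$, which tends to $0$ as $t\to\infty$, whereas your bound does not; the Gaussian tail is useless there since $\exp(-a_2 t^{-\beta_2})\to 1$. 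Your claim that the polynomial factor is ``dominated \dots by a constant for $t\geq 1$'' is in the wrong direction: the template needs decay, not just boundedness. The same problem occurs in the half-plane hypothesis, where doubling gives only $\mu(B(x_B,({\rm Re}\,\zeta/R)^{1/m}))^{-\sigma_{p_0}}\leq a_1$ for ${\rm Re}\,\zeta\geq 1$, not $\leq a_1({\rm Re}\,\zeta)^{-\beta_1}$; reverse doubling (a lower bound on ball growth) would be needed and is not assumed.

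The paper fixes exactly this by multiplying in an analytic factor: it sets
$F(z)=e^{-Rz}\,\mu(B(x_B,R^{-1/m}))^{\sigma_{p_0}}\langle e^{-zL}f_1,f_2\rangle$
and works in the unscaled variable $z$, evaluating at $z=(1-i\tau)R^{-1}$. The factor $e^{-Rz}$ is bounded on $\mathbb{C}_+$, preserves analyticity, and supplies the needed polynomial decay $e^{-R\,{\rm Re}\,z}(1+1/(R\,{\rm Re}\,z))^{\sigma_{p_0}n/m}\leq C(R\,{\rm Re}\,z)^{-\sigma_{p_0}n/m}$ uniformly for small and large ${\rm Re}\,z$, so that both hypotheses of Lemma~\ref{PL0} hold with $\beta_1=\sigma_{p_0}n/m$; since $e^{-R\,{\rm Re}\,z}$ at $z=(1-i\tau)R^{-1}$ equals $e^{-1}$, removing the factor at the end only costs a constant. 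So the ``analytic multiplier'' you speculate about in your last paragraph is not optional — it is the step that makes the argument go through, and without it the verification of the Phragm\'en--Lindel\"of hypotheses is incorrect.
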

\begin{proof}
For simplicity, we write $z=-(1-i\tau)R^{-1}$.
For any ball $B\subset X$, consider the analytic function $F:\mathbb{C}_{+}\rightarrow \mathbb{R}$ defined by
\begin{align*}
F(z):=e^{-Rz}\mu(B(x_{B},R^{-1/m}))^{\sigma_{p_{0}}}\langle e^{-zL}f_{1},f_{2}\rangle,
\end{align*}
where supp$f_{1}\subset B$ and supp$f_{2}\subset C_{\nu}(B)$.

It was shown in \cite[Proposition 3.1]{Blunck2} that
\begin{align*}
\|e^{-\frac{{\rm Re}z}{4}L}V_{({\rm Re}z)^{1/m}}^{\sigma_{p_{0}}}\|_{p_{0}\rightarrow 2}\leq C.
\end{align*}

This, together with the spectral theorem, shows that
\begin{align*}
\|e^{-zL}V_{({\rm Re}z)^{1/m}}^{\sigma_{p_{0}}}\|_{p_{0}\rightarrow 2}\leq\|e^{-\frac{{\rm Re}z}{4}L}\|_{2\rightarrow 2}\|e^{-(z-\frac{{\rm Re}z}{2})L}\|_{2\rightarrow 2}\|e^{-\frac{{\rm Re}z}{4}L}V_{({\rm Re}z)^{1/m}}^{\sigma_{p_{0}}}\|_{p_{0}\rightarrow 2}\leq C.
\end{align*}
Hence,
\begin{align}
|\langle e^{-zL}f_{1},f_{2}\rangle|
\leq \|e^{-zL}V_{({\rm Re}z)^{1/m}}^{\sigma_{p_{0}}}\|_{p_{0}\rightarrow 2}\|V_{({\rm Re}z)^{1/m}}^{-\sigma_{p_{0}}}f_{1}\|_{p_{0}}\|f_{2}\|_{2}
\leq C\mu(B(x_{B},({\rm Re}z)^{1/m}))^{-\sigma_{p_{0}}}\|f_{1}\|_{p_{0}}\|f_{2}\|_{2}.
\end{align}
This, in combination with doubling condition (\ref{doubling}), yields
\begin{align}\label{v3}
|F(z)|
&\leq Ce^{-R{\rm Re}z}\bigg(\frac{\mu(B(x_{B},R^{-1/m}))}{\mu(B(x_{B},({\rm Re}z)^{1/m}))}\bigg)^{\sigma_{p_{0}}}\|f_{1}\|_{p_{0}}\|f_{2}\|_{2}\nonumber\\
&\leq Ce^{-R{\rm Re}z}\Big(1+\frac{1}{R{\rm Re}z}\Big)^{\frac{\sigma_{p_{0}} n}{m}}\|f_{1}\|_{p_{0}}\|f_{2}\|_{2}\nonumber\\
&\leq C(R{\rm Re}z)^{\frac{-\sigma_{p_{0}} n}{m}}\|f_{1}\|_{p_{0}}\|f_{2}\|_{2}.
\end{align}

Similarly, by Lemma \ref{real},
\begin{align}\label{v4}
|F(\lambda)|
&\leq e^{-R\lambda}\mu(B(x_{B},R^{-1/m}))^{\sigma_{p_{0}}}\|\chi_{C_{\nu}(B)}e^{-\lambda L}\chi_{B}\|_{p_{0}\rightarrow 2}\|f_{1}\|_{p_{0}}\|f_{2}\|_{2}\nonumber\\
&\leq Ce^{-R\lambda}\bigg(\frac{\mu(B(x_{B},R^{-1/m}))}{\mu(B(x_{B},\lambda^{1/m}))}\bigg)^{\sigma_{p_{0}}}{\rm exp}\bigg(-c\Big(\frac{2^\nu r}{\lambda^{1/m}}\Big)^{\frac{m}{m-1}}\bigg)\|f_{1}\|_{p_{0}}\|f_{2}\|_{2}\nonumber\\
&\leq Ce^{-R\lambda}\Big(1+\frac{1}{R\lambda}\Big)^{\frac{\sigma_{p_{0}} n}{m}}{\rm exp}\bigg(-c\Big(\frac{2^\nu r}{\lambda^{1/m}}\Big)^{\frac{m}{m-1}}\bigg)\|f_{1}\|_{p_{0}}\|f_{2}\|_{2}\nonumber\\
&\leq C(R\lambda)^{-\frac{\sigma_{p_{0}} n}{m}}{\rm exp}\bigg(-c\Big(\frac{2^\nu r}{\lambda^{1/m}}\Big)^{\frac{m}{m-1}}\bigg)\|f_{1}\|_{p_{0}}\|f_{2}\|_{2}.
\end{align}

Next, combining (\ref{v3}) with (\ref{v4}), we choose $z=(1-i\tau)R^{-1}$, $a_{1}=CR^{-\frac{\sigma_{p_{0}} n}{m}}\|f_{1}\|_{p_{0}}\|f_{2}\|_{2}$, $a_{2}=c(2^\nu r)^{\frac{m}{m-1}}$, $\beta_{1}=\frac{\sigma_{p_{0}} n}{m}$ and $\beta_{2}=1/(m-1)$ in Lemma \ref{PL0} to obtain
\begin{align*}
|F((1-i\tau)R^{-1})|\leq C{\rm exp}\bigg(-c\Big(\frac{\sqrt[m]{R}2^\nu r}{1+|\tau|}\Big)^{\frac{m}{m-1}}\bigg)\|f_{1}\|_{p_{0}}\|f_{2}\|_{2},
\end{align*}
which yields the estimate (\ref{t1}).
\end{proof}

We define a Besov type norm of $F$ by
\begin{align*}
\|F\|_{B^{s}}:=\int_{-\infty}^{\infty}|\hat{F}(\tau)|(1+|\tau|)^{s}d\tau,
\end{align*}
where $\hat{F}$ denotes the Fourier transform of $F$. Applying Fubini theorem, we can easily check that for every functions $F$ and $G$,
\begin{align}\label{algebra}
\|FG\|_{B^{s}}
&=\int_{-\infty}^{\infty}|\widehat{FG}(\tau)|(1+|\tau|)^{s}d\tau\nonumber\\
&\leq \int_{-\infty}^{\infty}\int_{-\infty}^{\infty}|\hat{F}(\tau-\eta)\hat{G}(\eta)|(1+|\tau-\eta|)^{s}(1+|\eta|)^{s}d\eta d\tau\nonumber\\
&\leq \|F\|_{B^{s}}\|G\|_{B^{s}}.
\end{align}

In the sequel, for any $R>0$, we denote the dilation of a function $F$ by $\delta_{R}F(\cdot):=F(R\cdot)$.
\begin{proposition}\label{besov4}
For every $s\geq 0$, there exists a constant $C>0$ such that for every $\nu\geq 2$,
\begin{align}\label{besov2}
\|\chi_{C_{\nu}(B)}F(L)\chi_{B}\|_{p_{0}\rightarrow 2}\leq C\mu(B(x_{B},R^{-1/m}))^{-\sigma_{p_{0}}}(\sqrt[m]{R}2^\nu r)^{-s}\|\delta_{R}F\|_{B^{s}}
\end{align}
for all balls $B:=B(x_{B},r)\subseteq X$, and all Borel functions $F$ such that supp$F\subseteq [-R,R]$.
\end{proposition}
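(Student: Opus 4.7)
\textbf{Plan for the proof of Proposition \ref{besov4}.}

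The strategy is to represent $F(L)$ as a superposition of complex-time heat operators $e^{-(1-i\tau)R^{-1}L}$ via Fourier inversion, so that the off-diagonal bound from Lemma \ref{tt1} can be inserted pointwise in $\tau$. Concretely, fix a smooth cutoff $\phi \in C_c^\infty(\R)$ with $\phi \equiv 1$ on $[-1,1]$ and $\supp \phi \subseteq [-2,2]$, and define
\begin{equation*}
G(\lambda) := e^{\lambda}\,\phi(\lambda)\,(\delta_R F)(\lambda),
\end{equation*}
so that $\supp G \subseteq [-2,2]$ and, because $\supp (\delta_R F)\subseteq [-1,1]$,
\begin{equation*}
F(\lambda) = e^{-\lambda/R}\,G(\lambda/R) \quad \text{for all } \lambda \in \R.
\end{equation*}
Applying Fourier inversion to $G$ and the spectral theorem, I obtain the representation
\begin{equation*}
F(L) \;=\; \frac{1}{\sqrt{2\pi}} \int_{-\infty}^{\infty} \hat G(\tau)\, e^{-(1-i\tau)R^{-1}L}\,d\tau.
\end{equation*}

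Next I would apply Minkowski's inequality in the integral and use Lemma \ref{tt1} pointwise in $\tau$ to estimate
\begin{equation*}
\|\chi_{C_\nu(B)} F(L) \chi_B\|_{p_0 \to 2}
\leq C\,\mu(B(x_B,R^{-1/m}))^{-\sigma_{p_0}} \int_{-\infty}^{\infty} |\hat G(\tau)|\,\exp\!\Bigl(-c\bigl(\tfrac{\sqrt[m]{R}\,2^\nu r}{1+|\tau|}\bigr)^{m/(m-1)}\Bigr) d\tau.
\end{equation*}
To turn the Gaussian-type decay into the desired algebraic factor, I use the elementary pointwise bound $e^{-c y^{m/(m-1)}} \leq C_s\, y^{-s}$ for $y>0$, with $y = \sqrt[m]{R}\,2^\nu r/(1+|\tau|)$. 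This pulls out the factor $(\sqrt[m]{R}\,2^\nu r)^{-s}$ and leaves $\int |\hat G(\tau)|(1+|\tau|)^s\,d\tau = \|G\|_{B^s}$.

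The final step is to replace $\|G\|_{B^s}$ by $\|\delta_R F\|_{B^s}$, which is where one must be a bit careful since $e^\lambda$ has no finite Besov norm on $\R$; this is precisely why I introduced the cutoff $\phi$. Using the algebra property \eqref{algebra} twice,
\begin{equation*}
\|G\|_{B^s} \;=\; \|(e^\lambda \phi)\cdot(\delta_R F)\|_{B^s} \;\leq\; \|e^\lambda \phi\|_{B^s}\,\|\delta_R F\|_{B^s} \;\leq\; C_s\,\|\delta_R F\|_{B^s},
\end{equation*}
since $e^\lambda \phi(\lambda)$ is a fixed Schwartz function whose $B^s$-norm is finite for every $s\geq 0$. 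Combining all of the above yields \eqref{besov2}.

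The only real obstacle is the complex-time off-diagonal bound, which is handled by Lemma \ref{tt1}; once that is in hand, the rest is a Fourier-inversion plus Minkowski argument, with the small technical nuisance being the truncation of $e^\lambda$ needed to stay inside the Besov algebra.
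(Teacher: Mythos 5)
Your proposal is correct and follows essentially the same route as the paper: Fourier-inverting $G$ to express $F(L)$ as a superposition of $e^{-(1-i\tau)R^{-1}L}$, inserting the complex-time off-diagonal bound from Lemma \ref{tt1}, using the elementary bound $e^{-cy^{m/(m-1)}}\leq C_s y^{-s}$ to extract the factor $(\sqrt[m]{R}2^\nu r)^{-s}$, and controlling $\|G\|_{B^s}$ by $\|\delta_R F\|_{B^s}$ via the algebra property \eqref{algebra} with a compactly supported cutoff of $e^\lambda$. The only cosmetic difference is that you bake the cutoff $\phi$ into the definition of $G$ from the outset, whereas the paper introduces it only when estimating $\|G\|_{B^s}$; since $\supp(\delta_R F)\subseteq[-1,1]$ the two definitions of $G$ agree.
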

\begin{proof}
Let $G(\lambda)=(\delta_{R}F)(\lambda)e^{\lambda}$. In virtue of the Fourier inversion formula
\begin{align*}
F(L)=G(L/R)e^{-L/R}=\frac{1}{2\pi}\int_{\mathbb{R}}e^{(i\tau-1)R^{-1}L}\hat{G}(\tau)d\tau.
\end{align*}
This, in combination with estimate (\ref{t1}) yields that
\begin{align*}
\|\chi_{C_{\nu}(B)}F(L)\chi_{B}\|_{p_{0}\rightarrow 2}
&\leq\frac{1}{2\pi}\int_{\mathbb{R}}\|\chi_{C_{\nu}(B)}e^{(i\tau-1)R^{-1}L}\chi_{B}\|_{p_{0}\rightarrow 2}|\hat{G}(\tau)|d\tau\\
&\leq C\mu(B(x_{B},R^{-1/m}))^{-\sigma_{p_{0}}}\int_{\mathbb{R}}{\rm exp}\bigg(-c\Big(\frac{\sqrt[m]{R}2^\nu r}{1+|\tau|}\Big)^{\frac{m}{m-1}}\bigg)|\hat{G}(\tau)|d\tau\\
&\leq C\mu(B(x_{B},R^{-1/m}))^{-\sigma_{p_{0}}}(\sqrt[m]{R}2^\nu r)^{-s}\|G\|_{B^{s}}.
\end{align*}

Note that supp$\delta_{R}F\subseteq[-1,1]$. Thus by taking a smooth cutoff function $\psi$ such that supp$\psi\subset[-2,2]$ and $\psi(\lambda)=1$ for $\lambda\in[-1,1]$, we have
\begin{align*}
G(\lambda)=(\delta_{R}F)(\lambda)e^{\lambda}=(\delta_{R}F)(\lambda)\psi(\lambda)e^{\lambda}.
\end{align*}
Hence, by (\ref{algebra}),
\begin{align*}
\|G\|_{B^{s}}\leq C\|\delta_{R}F\|_{B^{s}}\|\psi(\lambda)e^{\lambda}\|_{B^{s}}\leq C\|\delta_{R}F\|_{B^{s}}.
\end{align*}
This ends the proof of proposition \ref{besov4}.
\end{proof}

\begin{remark}\label{rremark}
{\rm In  \cite[Lemma 2.5]{KU}, the author used some techniques introduced by Blunck\cite{Blunck2} to show that for any $\nu\geq 2$, $e^{-zL}$ satisfies the following off-diagonal estimate:
\begin{align*}
\|\chi_{C_{\nu}(B)}e^{-zL}\chi_{B}\|_{p_{0}\rightarrow 2}\leq C\frac{1}{\mu(B(x_{B},({\rm Re}z))^{\frac{1}{m}-1}|z|)^{\sigma_{p_{0}}}}\bigg(\frac{|z|}{{\rm Re}z}\bigg)^{\sigma_{p_{0}} n}2^{\nu n}{\rm exp}\bigg(-c\Big(\frac{2^\nu r}{({\rm Re}z)^{\frac{1}{m}-1}|z|}\Big)^{\frac{m}{m-1}}\bigg).
\end{align*}
It follows that for any $\nu\geq 2$, $R>0$, $\tau\in\mathbb{R}$,
\begin{align}\label{nn1}
\|\chi_{C_{\nu}(B)}e^{-(1-i\tau)R^{-1}L}\chi_{B}\|_{p_{0}\rightarrow 2}\leq C\frac{1}{\mu(B(x_{B},R^{-1/m}\sqrt{1+\tau^{2}}))^{\sigma_{p_{0}}}}(1+\tau^{2})^{\frac{\sigma_{p_{0}} n}{2}}2^{\nu n}{\rm exp}\bigg(-c\Big(\frac{2^\nu r}{\sqrt{1+\tau^{2}}}\Big)^{\frac{m}{m-1}}\bigg),
\end{align}
for all $B=B(x_{B},r)\subseteq X$. In our Lemma \ref{tt1}, we made an important improvement in
 obtaining the upper bound on the right hand side of (\ref{nn1}) without the factor ``$2^{\nu n}$", which plays a key role in obtaining the sharp growth $(1+|t|)^{p_{0}\sigma_{p_{0}} n}$ in the proof of Theorem \ref{main}. Also, in the setting of homogeneous space, it seems hard to remove $\sqrt{1+\tau^{2}}$ directly from the volume term $\mu(B(x_{B},R^{-1/m}\sqrt{1+\tau^{2}}))^{\sigma_{p_{0}}}$ appeared on the right hand side of (\ref{nn1}) by the doubling condition (\ref{doubling}). Instead, our lemma \ref{tt1} provide a slight different but much more subtle upper bound such that the factor $\sqrt{1+\tau^{2}}$ doesn't appear on the volume term, which is helpful to obtain off-diagonal estimates (\ref{besov2}) of compactly supported spectral multipliers.}
\end{remark}

\medskip

\section{Proof of theorem \ref{main}}
\setcounter{equation}{0}

Fix $f\in L^{p_0}(X)$. For $\alpha>\mu(X)^{-p_{0}}\|f\|_{p_{0}}^{p_{0}}$, we apply the Calder\'{o}n-Zygmund
decomposition at height $\alpha$ to $|f|$. Then there exist constants $C$ and $K$ so that

\smallskip
${\rm (i)}f=g+b=g+\sum_{j}b_{j};$

\smallskip
${\rm (ii)}\|g\|_{p_{0}}\leq C\|f\|_{p_{0}}, \|g\|_{\infty}\leq C\alpha;$

\smallskip
${\rm (iii)}b_{j}$ is supported in $B_{j}$ and $\#\{j:x\in 2B_{j}\}\leq K$ for all $x\in X;$

\smallskip
${\rm (iv)}\int_{X}|b_{j}|^{p_{0}}d\mu\leq C\alpha^{p_{0}}\mu(B_{j})$ and $\sum_{j}\mu(B_{j})\leq C\alpha^{-p_{0}}\|f\|_{p_{0}}^{p_{0}}$.

Let $r_{B_{j}}$ be the radius of $B_{j}$ and let
\begin{align*}
J_{k}=\{j:2^{k}\leq r_{B_{j}}<2^{k+1}\},\  {\rm for} \ k\in \mathbb{Z}.
\end{align*}
We decompose the bad function $b(x)$ as follows
\begin{align*}
b(x)=\sum_{k\leq k_{0}}\sum_{j\in J_{k}}b_{j}(x)+\sum_{k> k_{0}}\sum_{j\in J_{k}}b_{j}(x)=:h_{1}(x)+h_{2}(x),
\end{align*}
where $k_{0}$ is an integer such that $2^{k_{0}}\leq (1+|t|)^{\frac{p_{0}}{2}}< 2^{k_{0}+1}$. Then it is enough to show that there exists a constant $C>0$ independent of $\alpha$ and $t$ such that
\begin{align}\label{good}
\mu\Big(\Big\{x:\big|e^{itL}(I+L)^{-\sigma_{p_{0}} n}g(x)\big|>\alpha\Big\}\Big)\leq C\alpha^{-p_{0}}(1+|t|)^{p_{0}\sigma_{p_{0}} n}\|f\|_{{p_{0}}}^{p_{0}}
\end{align}
and such that for $i=1,2$,
\begin{align}\label{bad}
\mu\Big(\Big\{x:\big|e^{itL}(I+L)^{-\sigma_{p_{0}} n}h_{i}(x)\big|>\alpha\Big\}\Big)\leq C\alpha^{-p_{0}}(1+|t|)^{p_{0}\sigma_{p_{0}} n}\|f\|_{{p_{0}}}^{p_{0}}.
\end{align}

By the property (ii) and spectral theory,
$$\mu\Big(\Big\{x:\big|e^{itL}(I+L)^{-\sigma_{p_{0}} n}g(x)\big|>\alpha\Big\}\Big)\leq \alpha^{-2}\|e^{itL}(I+L)^{-\sigma_{p_{0}} n}g\|_{2}^{2}\leq \alpha^{-2}\|g\|_{2}^{2}\leq C\alpha^{-p_{0}}\|f\|_{p_{0}}^{p_{0}},$$
which proves (\ref{good}).

\medskip

\noindent
\textbf{Proof of (\ref{bad}) for $i=1$.}

\smallskip

Since the Schr\"odinger group $e^{-itL}$ is bounded on $L^{2}(X)$, we have
\begin{align*}
\mu\Big(\Big\{x:\big|e^{itL}(I+L)^{-\sigma_{p_{0}} n}h_{1}(x)\big|>\alpha\Big\}\Big)
&\leq \alpha^{-2}\|e^{itL}(I+L)^{-\sigma_{p_{0}} n}h_{1}\|_{2}^{2}\\
&\leq \alpha^{-2}\|(I+L)^{-\sigma_{p_{0}} n}h_{1}\|_{2}^{2}\\
&\leq \alpha^{-2}\Big{\|}\sum_{k\leq k_{0}}\sum_{j\in J_{k}}(I+L)^{-\sigma_{p_{0}} n}(I-e^{-r_{B_{j}}^{m}L})^{M}b_{j}(x)\Big{\|}_{2}^{2}\\
&+\alpha^{-2}\Big{\|}\sum_{k\leq k_{0}}\sum_{j\in J_{k}}(I+L)^{-\sigma_{p_{0}} n}[I-(I-e^{-r_{B_{j}}^{m}L})^{M}]b_{j}(x)\Big{\|}_{2}^{2}\\
&=:I+II.
\end{align*}
Now we borrow the argument from \cite{Aus} to estimate these two parts. We first estimate the term $I$ by duality:
\begin{align}\label{duiou1}
I&=\alpha^{-2}\sup\limits_{\|u\|_{2}=1}\bigg{(}\int_{X}u(x)\sum_{k\leq k_{0}}\sum_{j\in J_{k}}(I+L)^{-\sigma_{p_{0}} n}(I-e^{-r_{B_{j}}^{m}L})^{M}b_{j}(x)d\mu(x)\bigg{)}^{2}\nonumber\\
&\leq \alpha^{-2}\sup\limits_{\|u\|_{2}=1}\bigg{(}\sum_{k\leq k_{0}}\sum_{j\in J_{k}}\sum_{\nu=1}^{\infty}A_{\nu j}\bigg{)}^{2},
\end{align}
where $A_{\nu j}:=\int_{C_{\nu}(B_{j})}|(I+L)^{-\sigma_{p_{0}} n}(I-e^{-r_{B_{j}}^{m}L})^{M}b_{j}(x)||u(x)|d\mu(x)$.
By Proposition \ref{offyujieshi} and the fact that $r_{B_{j}}\leq 2^{k_{0}+1}\leq 2(1+|t|)^{\frac{p_{0}}{2}}$, for any $\nu\geq 1$,
\begin{align}\label{claim}
\|(I+L)^{-\sigma_{p_{0}} n}(I-e^{-r_{B_{j}}^{m}L})^{M}b_{j}\|_{L^{2}(C_{\nu}(B_{j}))}
&\leq \|\chi_{C_{\nu}(B_{j})}(I+L)^{-\sigma_{p_{0}} n}(I-e^{-r_{B_{j}}^{m}L})^{M}\chi_{B_{j}}\|_{p_{0}\rightarrow 2}\|b_{j}\|_{p_{0}}\nonumber\\
&\leq C(1+|t|)^{\frac{p_{0}\sigma_{p_{0}} n}{2}}2^{-\nu N}\alpha \mu(B_{j})^{\frac{1}{2}}.
\end{align}
Also  for any $y\in B_{j}$ and any $\nu \geq 1$,
\begin{align}\label{max}
\|u\|_{L^{2}(C_{\nu}(B_{j}))}
\leq \|u\|_{L^{2}((\nu+1)B_{j})}
\leq \mu\big((\nu+1)B_{j}\big)^{\frac{1}{2}}\mathscr {M}(|u|^{2})(y)^{\frac{1}{2}}
\leq C2^{\frac{\nu n}{2}}\mu(B_{j})^{\frac{1}{2}}\mathscr {M}(|u|^{2})(y)^{\frac{1}{2}},
\end{align}
where in the last inequality we used the doubling condition (\ref{doubling}).
We apply H\"{o}lder's inequality, one obtains
\begin{align*}
A_{\nu j}
\leq \|(I+L)^{-\sigma_{p_{0}} n}(I-e^{-r_{B_{j}}^{m}L})^{M}b_{j}\|_{L^{2}(C_{\nu}(B_{j}))}\|u\|_{L^{2}(C_{\nu}(B_{j}))}
\leq C(1+|t|)^{\frac{p_{0}\sigma_{p_{0}} n}{2}}2^{-(N-\frac{n}{2})\nu}\alpha \mu(B_{j})\mathscr {M}(|u|^{2})(y)^{\frac{1}{2}}.
\end{align*}
Averaging over $B_{j}$ yields
\begin{align*}
A_{\nu j}
&\leq C(1+|t|)^{\frac{p_{0}\sigma_{p_{0}} n}{2}}2^{-(N-\frac{n}{2})\nu}\alpha \int_{B_{j}}\mathscr {M}(|u|^{2})(y)^{\frac{1}{2}}d\mu(y).
\end{align*}
Choosing $N$ sufficient large and then Summing over $\nu\geq 1$ and $j$, we have
\begin{align*}
I
&\leq C(1+|t|)^{p_{0}\sigma_{p_{0}} n}\sup\limits_{\|u\|_{2}=1}\bigg(\int_{\mathop{\cup}\limits_{j}B_{j}}\mathscr {M}(|u|^{2})(y)^{\frac{1}{2}}d\mu(y)\bigg)^{2}\\
&\leq C(1+|t|)^{p_{0}\sigma_{p_{0}} n}\sup\limits_{\|u\|_{2}=1}\mu\Big(\mathop{\cup}\limits_{j}B_{j}\Big)\|u^{2}\|_{1}\\
&\leq C(1+|t|)^{p_{0}\sigma_{p_{0}} n}\alpha^{-p_{0}}\|f\|_{p_{0}}^{p_{0}},
\end{align*}
where in the next to last inequality, we use Kolmogorov's lemma and the weak type (1,1) of the Hardy-Littlewood maximal function, and in the last one we apply the property (iv) in the Calder\'{o}n-Zygmund decomposition.

Next, we estimate the second part $II$, by spectral theorem,
\begin{align*}
II&=\alpha^{-2}\Big{\|}\sum_{k\leq k_{0}}\sum_{j\in J_{k}}(I+L)^{-\sigma_{p_{0}} n}[I-(I-e^{-r_{B_{j}}^{m}L})^{M}]b_{j}(x)\Big{\|}_{2}^{2}\\
&\leq\alpha^{-2}\Big{\|}\sum_{k\leq k_{0}}\sum_{j\in J_{k}}[I-(I-e^{-r_{B_{j}}^{m}L})^{M}]b_{j}(x)\Big{\|}_{2}^{2}
\end{align*}
Observe that $[I-(I-e^{-r_{B_{j}}^{m}L})^{M}]$ is a finite combination of the terms $e^{-jr_{B_{j}}^{m}L}$, $j=1,\ldots, M$ and that, by the doubling condition (\ref{doubling}) and Lemma \ref{real}, semigroup $e^{-jr_{B_{j}}^{m}L}$ satisfies the following estimate
\begin{align*}
\|\chi_{C_{\nu}(B_{j})}e^{-jr_{B_{j}}^{m}L}\chi_{B_{j}}\|_{p_{0}\rightarrow 2}\leq C2^{-\nu N}\mu(B_{j})^{-\sigma_{p_{0}}}.
\end{align*}
We can easily apply the same duality argument as estimating the term $I$ to show that
\begin{align*}
II\leq C\alpha^{-p_{0}}\|f\|_{p_{0}}^{p_{0}}.
\end{align*}

Combining the estimates for $I$ and $II$, we obtain (\ref{bad}) for $i=1$, i.e.,
\begin{align*}
\mu\Big(\Big\{x:\big|e^{itL}(I+L)^{-\sigma_{p_{0}} n}h_{1}(x)\big|>\alpha\Big\}\Big)\leq C\alpha^{-p_{0}}(1+|t|)^{p_{0}\sigma_{p_{0}} n}\|f\|_{{p_{0}}}^{p_{0}}.
\end{align*}

\medskip

\noindent
\textbf{Proof of (\ref{bad}) for $i=2$.}

\smallskip

Set $\Omega_{t}:=\mathop{\cup}\limits_{j}2(1+|t|)^{p_{0}\sigma_{p_{0}}}B_{j}$. By (iv) in the Calder\'{o}n-Zygmund decomposition,
\begin{align*}
\mu\Big(\Big\{x\in\Omega_{t}:\big|e^{itL}(I+L)^{-\sigma_{p_{0}} n}h_{2}(x)\big|>\alpha\Big\}\Big)
&\leq C\mu(\mathop{\cup}\limits_{j}2(1+|t|)^{p_{0}\sigma_{p_{0}}}B_{j})\\
&\leq C(1+|t|)^{p_{0}\sigma_{p_{0}} n}\sum_{j}\mu(B_{j})\\
&\leq C\alpha^{-p_{0}}(1+|t|)^{p_{0}\sigma_{p_{0}} n}\|f\|_{p_{0}}^{p_{0}}.
\end{align*}

Next we show that
\begin{align*}
\mu\Big(\Big\{x\in\Omega_{t}^{c}:\big|e^{itL}(I+L)^{-\sigma_{p_{0}} n}h_{2}(x)\big|>\alpha\Big\}\Big)\leq C\alpha^{-p_{0}}(1+|t|)^{p_{0}\sigma_{p_{0}} n}\|f\|_{{p_{0}}}^{p_{0}}.
\end{align*}
Note that for every $j\in J_{k}$, $k>k_{0}$, the function $b_{j}$ is supported in $B_{j}$, and the radius of the ball $B_{j}$ is equivalent to $2^{k}$. We decompose $(I+L)^{-\sigma_{p_{0}} n}e^{itL}b_{j}$ into two parts:
\begin{align}\label{b1}
e^{itL}(I+L)^{-\sigma_{p_{0}} n}b_{j}
=e^{itL}(I+L)^{-\sigma_{p_{0}} n}[I-(I-e^{-2^{mk}L})^{M}]b_{j}
+e^{itL}(I+L)^{-\sigma_{p_{0}} n}(I-e^{-2^{mk}L})^{M}b_{j},
\end{align}
where $M$ is a fixed parameter chosen to be bigger than $\frac{n}{2m}$.

Consider the term $e^{itL}(I+L)^{-\sigma_{p_{0}} n}[I-(I-e^{-2^{mk}L})^{M}]b_{j}$.
By the spectral theorem,
\begin{align*}
&\mu\Big(\Big\{x\in\Omega_{t}^{c}:\big|\sum_{k> k_{0}}\sum_{j\in J_{k}}e^{itL}(I+L)^{-\sigma_{p_{0}} n}[I-(I-e^{-2^{mk}L})^{M}]b_{j}(x)\big|>\alpha\Big\}\Big)\\
\leq &\alpha^{-2}\Big\|\sum_{k> k_{0}}\sum_{j\in J_{k}}e^{itL}(I+L)^{-\sigma_{p_{0}} n}[I-(I-e^{-2^{mk}L})^{M}]b_{j}(x)\Big\|_{2}^{2}\\
\leq &\alpha^{-2}\Big\|\sum_{k> k_{0}}\sum_{j\in J_{k}}[I-(I-e^{-2^{mk}L})^{M}]b_{j}(x)\Big\|_{2}^{2}.
\end{align*}
Then we  follow the similar procedure as estimating the term $II$ to show that
\begin{align*}
\mu\Big(\Big\{x\in\Omega_{t}^{c}:\big|\sum_{k> k_{0}}\sum_{j\in J_{k}}e^{itL}(I+L)^{-\sigma_{p_{0}} n}[I-(I-e^{-2^{mk}L})^{M}]b_{j}(x)\big|>\alpha\Big\}\Big)\leq C\alpha^{-p_{0}}\|f\|_{p_{0}}^{p_{0}}.
\end{align*}
For  the term $e^{itL}(I+L)^{-\sigma_{p_{0}} n}(I-e^{-2^{mk}L})^{M}b_{j}$ in (\ref{b1}), we let $\varphi_{1}$ be a smooth function such that ${\rm supp}\varphi_{1}\subset[1/2,\infty]$ and $\varphi_{1}(\lambda)=1$ on $[1,\infty]$. Also, let $\varphi_{0}(\lambda)=1-\varphi_{1}(\lambda)$. We further decompose this part as follow
\begin{align*}
e^{itL}(I+L)^{-\sigma_{p_{0}} n}(I-e^{-2^{mk}L})^{M}b_{j}=e^{itL}F_{k}(L)b_{j}+e^{itL}G_{k}(L)b_{j},
\end{align*}
where
\begin{align*}
F_{k}(L)=(I+L)^{-\sigma_{p_{0}} n}(I-e^{-2^{mk}L})^{M}\varphi_{0}(2^{-m(k-k_{0})/(m-1)}L)
\end{align*}
and
\begin{align*}
G_{k}(L)=(I+L)^{-\sigma_{p_{0}} n}(I-e^{-2^{mk}L})^{M}\varphi_{1}(2^{-m(k-k_{0})/(m-1)}L).
\end{align*}
Hence
\begin{align*}
\mu\Big(\Big\{x\in\Omega_{t}^{c}:&\big|\sum_{k> k_{0}}\sum_{j\in J_{k}}e^{itL}(I+L)^{-\sigma_{p_{0}} n}(I-e^{-2^{mk}L})^{M}b_{j}(x)\big|>\alpha\Big\}\Big)\\
&\leq \mu\Big(\Big\{x\in\Omega_{t}^{c}:\big|\sum_{k> k_{0}}\sum_{j\in J_{k}}e^{itL}F_{k}(L)b_{j}(x)\big|>\frac{\alpha}{2}\Big\}\Big)\\
&+ \mu\Big(\Big\{x\in\Omega_{t}^{c}:\big|\sum_{k> k_{0}}\sum_{j\in J_{k}}e^{itL}G_{k}(L)b_{j}(x)\big|>\frac{\alpha}{2}\Big\}\Big)=:III+IV.
\end{align*}

The estimate of the term $III$ is delicate. We define $\nu_{0}\in \mathbb{N}$ such that $2^{\nu_{0}}\leq 2(1+|t|)^{p_{0}\sigma_{p_{0}}}< 2^{\nu_{0}+1}$. It is easy to see that $\nu_{0}$ is well-defined. Next we show the following lemma, which plays a crucial role in estimating the term $III$.

\begin{lemma}\label{lemm}
With the notation above, then for any $\frac{n}{2}<s<mM$, there exists a positive constant $C$ independent of $k>k_{0}$, $j\in J_{k}$ and $\nu\geq \nu_{0}$, such that
\begin{align}\label{key}
\|\chi_{C_{\nu}(B_{j})}e^{itL}F_{k}(L)\chi_{B_{j}}f\|_{2}\leq C2^{-\nu s}\mu(B_{j})^{-\sigma_{p_{0}}}(1+|t|)^{p_{0}\sigma_{p_{0}} (s+\frac{n}{2})}\|f\|_{p_{0}}.
\end{align}
\end{lemma}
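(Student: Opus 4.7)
First, I dyadically decompose the symbol $F_k$ in the spectral variable and apply Proposition~\ref{besov4} to each piece. Fix a smooth Littlewood--Paley partition $\sum_{\ell\in\ZZ}\psi_\ell=1$ on $(0,\infty)$ with $\supp\psi_\ell\subset[2^{\ell-1},2^{\ell+1}]$, and write $F_k=\sum_\ell F_k\psi_\ell$. Since $\varphi_0(\cdot/R)$ enforces the spectral support of $F_k$ in $[0,R]$ with $R:=2^{m(k-k_0)/(m-1)}$, only pieces with $2^\ell\lesssim R$ contribute. Applying Proposition~\ref{besov4} with parameter $R_\ell:=2^{\ell+1}$ to the compactly supported symbol $e^{it\lambda}F_k(\lambda)\psi_\ell(\lambda)$ yields
\begin{equation*}
\|\chi_{C_\nu(B_j)}e^{itL}(F_k\psi_\ell)(L)\chi_{B_j}\|_{p_0\to 2}\leq C\mu(B(x_{B_j},R_\ell^{-1/m}))^{-\sigma_{p_0}}(R_\ell^{1/m}2^\nu r_{B_j})^{-s}\|\delta_{R_\ell}(e^{it\cdot}F_k\psi_\ell)\|_{B^s}.
\end{equation*}

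Next, the rescaled symbol $\tilde F_\ell(\lambda):=(F_k\psi_\ell)(R_\ell\lambda)$ is smooth and supported in $[1/2,2]$. A direct derivative computation for the factorization $(1+R_\ell\lambda)^{-n\sigma_{p_0}}(1-e^{-2^{mk}R_\ell\lambda})^M\varphi_0(R_\ell\lambda/R)\psi_0(\lambda)$ shows that all derivatives of $\tilde F_\ell$ are bounded on $[1/2,2]$ by $A_\ell:=\min\{R_\ell^{-n\sigma_{p_0}},(2^{mk}R_\ell)^M,1\}$: indeed $(1+R_\ell\lambda)^{-n\sigma_{p_0}}\sim R_\ell^{-n\sigma_{p_0}}$ for $R_\ell\geq 1$ with derivatives of the same order, while $(1-e^{-2^{mk}R_\ell\lambda})^M$ is exponentially close to $1$ when $2^{mk}R_\ell\geq 1$ and contributes $(2^{mk}R_\ell)^M$ when $2^{mk}R_\ell<1$. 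Hence $\|\tilde F_\ell\|_{B^s}\leq CA_\ell$, and via the Fourier translation identity together with the algebra property~\eqref{algebra}, $\|\delta_{R_\ell}(e^{it\cdot}F_k\psi_\ell)\|_{B^s}\leq C(1+|tR_\ell|)^sA_\ell$. Using doubling~\eqref{doubling} to convert the volume factor to $\mu(B_j)^{-\sigma_{p_0}}$ (gaining a factor $(R_\ell^{1/m}r_{B_j})^{n\sigma_{p_0}}$ when $R_\ell^{-1/m}\leq r_{B_j}$), each dyadic piece becomes
\begin{equation*}
\|\chi_{C_\nu(B_j)}e^{itL}(F_k\psi_\ell)(L)\chi_{B_j}\|_{p_0\to 2}\leq C2^{-\nu s}\mu(B_j)^{-\sigma_{p_0}}(R_\ell^{1/m}r_{B_j})^{n\sigma_{p_0}-s}(1+|tR_\ell|)^sA_\ell.
\end{equation*}

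Finally, the summation over $\ell$ is dominated by the top scale $R_\ell\sim R$, where $A_\ell\sim R^{-n\sigma_{p_0}}$; contributions from smaller $R_\ell$ form a geometrically decaying tail, convergent because $s>n/2\geq n\sigma_{p_0}$ governs the large-$R_\ell$ regime and the constraint $s<mM$ ensures convergence in the small-$R_\ell$ regime via the factor $R_\ell^M$. Substituting $R=2^{m(k-k_0)/(m-1)}$, $r_{B_j}\sim 2^k$, $2^{k_0}\sim(1+|t|)^{p_0/2}$, and using $p_0\sigma_{p_0}=1-p_0/2$, a direct exponent computation at $R_\ell=R$ gives
\begin{equation*}
(R^{1/m}r_{B_j})^{n\sigma_{p_0}-s}R^{-n\sigma_{p_0}}|tR|^s\sim 2^{-k_0(s-n\sigma_{p_0})}|t|^s\sim(1+|t|)^{p_0\sigma_{p_0}(s+n/2)},
\end{equation*}
with the $k$-dependence cancelling exactly, which yields the claimed bound.

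The main obstacle is the bookkeeping across the three regimes of $R_\ell$ (namely $R_\ell\lesssim 2^{-mk}$, $2^{-mk}\lesssim R_\ell\lesssim 1$, and $R_\ell\gtrsim 1$), each requiring a different Besov-norm estimate for $\tilde F_\ell$ and a different doubling case; and verifying that the geometric series in $\ell$ is indeed dominated by the top scale $R_\ell\sim R$, which is precisely what produces the sharp power $(1+|t|)^{p_0\sigma_{p_0}(s+n/2)}$ rather than a worse one.
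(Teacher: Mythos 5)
Your proof follows the same route as the paper's: a Littlewood--Paley decomposition in the spectral variable, application of Proposition~\ref{besov4} (the compactly supported spectral multiplier off-diagonal estimate) to each dyadic piece, a Besov norm estimate of the rescaled symbol via the factorization and the algebra property~\eqref{algebra}, and a geometric summation over $\ell$ in which the top scale $R_\ell\sim R=2^{m(k-k_0)/(m-1)}$ produces exactly the sharp power $(1+|t|)^{p_0\sigma_{p_0}(s+n/2)}$. The only blemish is that your displayed bound for each dyadic piece uses $(R_\ell^{1/m}r_{B_j})^{n\sigma_{p_0}}$ where the correct doubling factor is $(1+R_\ell^{1/m}r_{B_j})^{n\sigma_{p_0}}$ (your parenthetical shows you know this), which matters only in the regime $R_\ell^{1/m}r_{B_j}<1$ and does not affect the conclusion.
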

\begin{proof}
Let $\phi$ be a non-negative $C_{c}^{\infty}$ function on $\mathbb{R}$ such that ${\rm supp}\phi \subseteq (1/4, 1)$ and
\begin{align*}
\sum_{\ell\in\mathbb{Z}}\phi(2^{-\ell}\lambda)=1,\ \ \forall \lambda>0,
\end{align*}
and let $\phi_{\ell}(\lambda)$ denote the function $\phi(2^{-\ell}\lambda)$.

By the spectral theory, one writes
\begin{align*}
e^{itL}F_{k}(L)=\sum_{\ell=-\infty}^{m(k-k_{0})/(m-1)}e^{itL}(I+L)^{-\sigma_{p_{0}} n}(I-e^{-2^{mk}L})^{M}\varphi_{0}(2^{-m(k-k_{0})/(m-1)}L)\phi(2^{-\ell}L).
\end{align*}
Set $F_{k,\ell}(\lambda):=e^{it\lambda}(1+\lambda)^{-\sigma_{p_{0}} n}(I-e^{-2^{mk}\lambda})^{M}\varphi_{0}(2^{-m(k-k_{0})/(m-1)}\lambda)\phi(2^{-\ell}\lambda)$, then we apply Minkowski's inequality, estimate (\ref{besov2}) and the doubling condition(\ref{doubling}) to get that
\begin{align}\label{c1}
\|\chi_{C_{\nu}(B_{j})}e^{itL}F_{k}(L)\chi_{B_{j}}\|_{p_{0}\rightarrow 2}
&\leq \sum_{\ell=-\infty}^{m(k-k_{0})/(m-1)}\|\chi_{C_{\nu}(B_{j})}F_{k,\ell}(L)\chi_{B_{j}}\|_{p_{0}\rightarrow 2}\nonumber\\
&\leq C\sum_{\ell=-\infty}^{m(k-k_{0})/(m-1)}\mu(B(x_{B_{j}},2^{-\ell/m}))^{-\sigma_{p_{0}}}(2^{\ell/m}2^{\nu} r_{B_{j}})^{-s}\|\delta_{2^{\ell}}F_{k,\ell}\|_{B^s}\nonumber\\
&\leq C2^{-\nu s}\mu(B_{j})^{-\sigma_{p_{0}}}\sum_{\ell=-\infty}^{m(k-k_{0})/(m-1)}(1+2^{\ell/m}r_{B_{j}})^{\sigma_{p_{0}} n}(2^{\ell/m}r_{B_{j}})^{-s}\|\delta_{2^{\ell}}F_{k,\ell}\|_{B^{s}}.
\end{align}

To go on, we claim that
\begin{align}\label{besov}
\|\delta_{2^{\ell}}F_{k,\ell}\|_{B^{s}}\leq C{\rm min}\{1,2^{(\ell+mk)M}\}{\rm min}\{1,2^{-\sigma_{p_{0}} n\ell}\}{\rm max}\{1,(2^{\ell}(1+|t|))^{s}\}.
\end{align}
Let us show the claim (\ref{besov}). Now we let $\eta\in C_{c}^{\infty}(\mathbb{R})$ with ${\rm supp}\eta\subset[1/8,2]$ and $\eta(\lambda)=1$ for $\lambda\in[1/4,1]$. One has
\begin{align*}
&\|\delta_{2^{\ell}}F_{k,\ell}(\lambda)\|_{B^{s}}\\
=&\|e^{it2^{\ell}\lambda}(1+2^{\ell}\lambda)^{-\sigma_{p_{0}} n}(1-e^{-2^{mk+\ell}\lambda})^{M}\varphi_{0}(2^{-m(k-k_{0})/(m-1)}2^{\ell}\lambda)\phi(\lambda)\|_{B^{s}}\\
\leq & \|\eta(\lambda)\varphi_{0}(2^{-m(k-k_{0})/(m-1)}2^{\ell}\lambda)\|_{B^{s}}\|\eta(\lambda)(1-e^{-2^{mk+\ell}\lambda})^{M}\|_{B^{s}}\|\phi(\lambda)e^{it2^{\ell}\lambda}(1+2^{\ell}\lambda)^{-\sigma_{p_{0}} n}\|_{B^{s}}\\
\leq & C \|\eta(\lambda)\varphi_{0}(2^{-m(k-k_{0})/(m-1)}2^{\ell}\lambda)\|_{C^{s+2}}\|\eta(\lambda)(1-e^{-2^{mk+\ell}\lambda})^{M}\|_{C^{s+2}}\|\phi(\lambda)e^{it2^{\ell}\lambda}(1+2^{\ell}\lambda)^{-\sigma_{p_{0}} n}\|_{B^{s}}.
\end{align*}
Note that $\ell\leq m(k-k_{0})/(m-1)$ and ${\rm supp}\eta\subset[1/8,2]$,
\begin{align*}
\|\eta(\lambda)\varphi_{0}(2^{-m(k-k_{0})/(m-1)}2^{\ell}\lambda)\|_{C^{s+2}}\leq C
\end{align*}
and
\begin{align*}
\|\eta(\lambda)(1-e^{-2^{mk+\ell}\lambda})^{M}\|_{C^{s+2}}\leq C{\rm min}\{1,2^{(\ell+mk)M}\}
\end{align*}
with $C$ independent of $k$ and $\ell$.

As for the third term $\|\phi(\lambda)e^{it2^{\ell}\lambda}(1+2^{\ell}\lambda)^{-\sigma_{p_{0}} n}\|_{B^{s}}$,we note that the Fourier transform $\mathcal{F}(\phi e^{it2^{\ell}\cdot}(1+2^{\ell}\cdot)^{-\sigma_{p_{0}} n})(\tau)$ of $\phi(\lambda)e^{it2^{\ell}\lambda}(1+2^{\ell}\lambda)^{-\sigma_{p_{0}} n}$ is given by
\begin{align*}
\mathcal{F}(\phi e^{it2^{\ell}\cdot}(1+2^{\ell}\cdot)^{-\sigma_{p_{0}} n})(\tau)
=\int_{\mathbb{R}}\phi(\lambda)\frac{e^{i(2^{\ell}t-\tau)\lambda}}{(1+2^{\ell}\lambda)^{\sigma_{p_{0}} n}}d\lambda.
\end{align*}
Integration by parts gives for every $N\in \mathbb{N}$,
\begin{align*}
|\mathcal{F}(\phi e^{it2^{\ell}\cdot}(1+2^{\ell}\cdot)^{-\sigma_{p_{0}} n})(\tau)|
\leq C(1+2^{\ell})^{-\sigma_{p_{0}} n}(1+|2^{\ell}t-\tau|)^{-N},
\end{align*}
which yields that,
\begin{align*}
\|\phi(\lambda)e^{it2^{\ell}\lambda}(1+2^{\ell}\lambda)^{-\sigma_{p_{0}} n}\|_{B^{s}}
&\leq C{\rm min}\{1,2^{- \sigma_{p_{0}} n \ell}\}\int_{\mathbb{R}}(1+|2^{\ell}t-\tau|)^{-N}(1+|\tau|)^{s}d\tau\\
&\leq C{\rm min}\{1,2^{- \sigma_{p_{0}} n \ell}\}(1+2^{\ell}t)^{s}\\
&\leq C{\rm min}\{1,2^{- \sigma_{p_{0}} n \ell}\}{\rm max}\{1,(2^{\ell}(1+|t|))^{s}\}.
\end{align*}
Hence, (\ref{besov}) holds.

It follows from (\ref{c1}) and (\ref{besov}) that
\begin{align*}
&\|\chi_{C_{\nu}(B_{j})}e^{itL}F_{k}(L)\chi_{B_{j}}\|_{p_{0}\rightarrow 2}\\
\leq& C 2^{-\nu s}\mu(B_{j})^{-\sigma_{p_{0}}}\sum_{\ell=-\infty}^{m(k-k_{0})/(m-1)}(1+2^{\ell/m}r_{B_{j}})^{\sigma_{p_{0}} n}(2^{\ell/m}r_{B_{j}})^{-s}{\rm min}\{1,2^{(\ell+mk)M}\}{\rm min}\{1,2^{-\sigma_{p_{0}} n\ell}\}{\rm max}\{1,(2^{\ell}(1+|t|))^{s}\}\\
\leq& C 2^{-\nu s}\mu(B_{j})^{-\sigma_{p_{0}}}r_{B_{j}}^{\sigma_{p_{0}} n-s}\sum_{\ell=1}^{m(k-k_{0})/(m-1)}2^{(s-\sigma_{p_{0}} n)(1-\frac{1}{m})\ell}(1+|t|)^{s}+ C_{s} 2^{-\nu s}\mu(B_{j})^{-\sigma_{p_{0}}}r_{B_{j}}^{\sigma_{p_{0}} n -s}\sum_{\ell=-2k_{0}/p_0}^{0}2^{(s+\frac{\sigma_{p_{0}} n-s}{m})\ell}(1+|t|)^{s}\\
+& C 2^{-\nu s}\mu(B_{j})^{-\sigma_{p_{0}}}r_{B_{j}}^{\sigma_{p_{0}} n-s}\sum_{\ell=-mk}^{-2k_{0}/p_0}2^{\frac{\ell}{m}(\sigma_{p_{0}} n-s)}+ C_{s} 2^{-\nu s}\mu(B_{j})^{-\sigma_{p_{0}}}\sum_{\ell=-\infty}^{-mk}2^{(\ell +mk)(M-\frac{s}{m})}\\
\leq& C 2^{-\nu s}\mu(B_{j})^{-\sigma_{p_{0}}}(1+|t|)^{p_{0}\sigma_{p_{0}}(s+\frac{n}{2})},
\end{align*}
where in the last inequality we used the fact that $\frac{n}{2}<s<mM$.

This finishes the proof of Lemma \ref{lemm}.
\end{proof}
Back to the estimate of the term $III$, we now apply Cauchy-Schwartz inequality to obtain
\begin{align}\label{RHHS}
III&=\mu\Big(\Big\{x\in\Omega_{t}^{c}:\big|\sum_{k> k_{0}}\sum_{j\in J_{k}}e^{itL}F_{k}(L)b_{j}(x)\big|>\alpha\Big\}\Big)\nonumber\\
&\leq \alpha^{-1}\sum_{k> k_{0}}\sum_{j\in J_{k}}\sum_{\nu=\nu_{0}}^{\infty}\int_{C_{\nu}(B_{j})}\big|e^{itL}F_{k}(L)b_{j}(x)\big|d\mu(x)\nonumber\\
&\leq \alpha^{-1}\sum_{k> k_{0}}\sum_{j\in J_{k}}\sum_{\nu=\nu_{0}}^{\infty}\mu\big(C_{\nu}(B_{j})\big)^{\frac{1}{2}}\bigg(\int_{C_{\nu}(B_{j})}\big|e^{itL}F_{k}(L)b_{j}(x)\big|^{2}d\mu(x)\bigg)^{\frac{1}{2}}.
\end{align}
This, in combination with the doubling condition (\ref{doubling}) and Lemma \ref{lemm}, we conclude that
\begin{align*}
{\rm RHS\ of\ \eqref{RHHS}}
&\leq C\alpha^{-1}\sum_{k> k_{0}}\sum_{j\in J_{k}}\sum_{\nu=\nu_{0}}^{\infty}2^{\frac{\nu n}{2}}\mu(B_{j})^{\frac{1}{2}}\|\chi_{C_{\nu}(B_{j})}e^{itL}F_{k}(L)\chi_{B_{j}}\|_{p_{0}\rightarrow 2}\|b_{j}\|_{p_{0}}\\
&\leq C\sum_{k> k_{0}}\sum_{j\in J_{k}}\sum_{\nu=\nu_{0}}^{\infty}\mu(B_{j})2^{\nu(\frac{n}{2}-s)}(1+|t|)^{p_{0}\sigma_{p_{0}}(s+\frac{n}{2})}\\
&\leq C(1+|t|)^{p_{0}\sigma_{p_{0}} n}\sum_{j}\mu(B_{j})\\
&\leq C(1+|t|)^{p_{0}\sigma_{p_{0}} n}\alpha^{-p_{0}}\|f\|_{p_{0}}^{p_{0}}.
\end{align*}

Concerning the term $IV$, since the Schr\"{o}dinger group $e^{-itL}$ is bounded on $L^{2}(X)$, we have
\begin{align*}
IV
&\leq C\alpha^{-2}\Big\|\sum_{k>k_{0}}\sum_{j\in J_{k}}e^{itL}G_{k}(L)b_{j}\Big\|_{2}^{2}\\
&\leq C\alpha^{-2}\Big\|\sum_{k>k_{0}}\sum_{j\in J_{k}}G_{k}(L)b_{j}\Big\|_{2}^{2}\\
&\leq C\alpha^{-2}\Big\|\sum_{k>k_{0}}\sum_{j\in J_{k}}\chi_{2B_{j}}G_{k}(L)b_{j}\Big\|_{2}^{2}+C\alpha^{-2}\Big\|\sum_{k>k_{0}}\sum_{j\in J_{k}}\chi_{(2B_{j})^{c}}G_{k}(L)b_{j}\Big\|_{2}^{2}=:IV_{1}+IV_{2}.
\end{align*}

To handle the term $IV_{1}$, we first note that
\begin{align*}
&\|G_{k}(L)b_{j}\|_{2}\leq \|G_{k}(L)(I+2^{-m(k-k_{0})/(m-1)}L)^{\sigma_{p_{0}} n}\|_{2\rightarrow 2}\|(I+2^{-m(k-k_{0})/(m-1)}L)^{-\sigma_{p_{0}} n}b_j\|_{2}\\
&\leq \|(1+\lambda)^{-\sigma_{p_{0}} n}(1-e^{-2^{mk}\lambda})^{M}\varphi_{1}(2^{-m(k-k_{0})/(m-1)}\lambda)(1+2^{-m(k-k_{0})/(m-1)}\lambda)^{\sigma_{p_{0}} n}\|_{\infty}\|(I+2^{-m(k-k_{0})/(m-1)}L)^{-\sigma_{p_{0}} n}b_j\|_{2}\\
&\leq C2^{-\frac{m(k-k_{0})}{m-1}\sigma_{p_{0}} n}\|(I+2^{-m(k-k_{0})/(m-1)}L)^{-\sigma_{p_{0}} n}b_j\|_{2}.
\end{align*}

Since the $2B_{j}$'s have bounded overlaps, we apply Minkowski's inequality to obtain
\begin{align}\label{b4}
\Big\|\sum_{k>k_{0}}\sum_{j\in J_{k}}\chi_{2B_{j}}G_{k}(L)b_{j}\Big\|_{2}^{2}
&\leq C\sum_{k>k_{0}}\sum_{j\in J_{k}}\int_{X}|G_{k}(L)b_{j}(x)|^{2}d\mu(x)\nonumber\\
&\leq C\sum_{k>k_{0}}\sum_{j\in J_{k}}2^{-\frac{2m(k-k_{0})}{m-1}\sigma_{p_{0}} n}\|(I+2^{-m(k-k_{0})/(m-1)}L)^{-\sigma_{p_{0}} n}b_j\|_{2}^{2}\nonumber\\
&\leq C\sum_{k>k_{0}}\sum_{j\in J_{k}}2^{-\frac{2m(k-k_{0})}{m-1}\sigma_{p_{0}} n}\|\chi_{2B_{j}}(I+2^{-m(k-k_{0})/(m-1)}L)^{-\sigma_{p_{0}} n}\chi_{2B_{j}}\|_{p_{0}\rightarrow 2}^{2}\|b_{j}\|_{p_{0}}^{2}\nonumber\\
&+C\sum_{k>k_{0}}\sum_{j\in J_{k}}\sum_{\nu=2}^{\infty}2^{-\frac{2m(k-k_{0})}{m-1}\sigma_{p_{0}} n}\|\chi_{C_{\nu}(B_{j})}(I+2^{-m(k-k_{0})/(m-1)}L)^{-\sigma_{p_{0}} n}\chi_{B_{j}}\|_{p_{0}\rightarrow 2}^{2}\|b_{j}\|_{p_{0}}^{2}.
\end{align}
By applying the representation formula
\begin{align*}
(I+2^{-m(k-k_{0})/(m-1)}L)^{-\sigma_{p_{0}} n}=\frac{1}{\Gamma(\sigma_{p_{0}} n)}\int_{0}^{\infty}e^{-\lambda 2^{-m(k-k_{0})/(m-1)}L}e^{-\lambda}\lambda^{\sigma_{p_{0}} n-1}d\lambda
\end{align*}
and Lemma \ref{real}, we conclude that
\begin{align*}
\|\chi_{2B_{j}}(I+2^{-m(k-k_{0})/(m-1)}L)^{-\sigma_{p_{0}} n}\chi_{2B_{j}}\|_{p_{0}\rightarrow 2}
&\leq C\int_{0}^{\infty}\|\chi_{2B_{j}}e^{-\lambda 2^{-m(k-k_{0})/(m-1)}L}\chi_{2B_{j}}\|_{p_{0}\rightarrow 2}e^{-\lambda}\lambda^{\sigma_{p_{0}} n-1}d\lambda\\
&\leq C\int_{0}^{\infty}\mu(B(x_{B_{j}},\lambda^{\frac{1}{m}}2^{-\frac{k-k_{0}}{m-1}}))^{-\sigma_{p_{0}}}e^{-\lambda}\lambda^{\sigma_{p_{0}} n-1}d\lambda
\end{align*}
and that for any $\nu\geq 2$,
\begin{align*}
\|\chi_{C_{\nu}(B_{j})}(I+2^{-m(k-k_{0})/(m-1)}L)^{-\sigma_{p_{0}} n}\chi_{B_{j}}\|_{p_{0}\rightarrow 2}
&\leq C\int_{0}^{\infty}\|\chi_{C_{\nu}(B_{j})}e^{-\lambda 2^{-m(k-k_{0})/(m-1)}L}\chi_{B_{j}}\|_{p_{0}\rightarrow 2}e^{-\lambda}\lambda^{\sigma_{p_{0}} n-1}d\lambda\\
\leq C\int_{0}^{\infty}&\mu(B(x_{B_{j}},\lambda^{\frac{1}{m}}2^{-\frac{k-k_{0}}{m-1}}))^{-\sigma_{p_{0}}}{\rm exp}\bigg(-c\Big(\frac{ 2^{\frac{k-k_{0}}{m-1}}2^\nu r_{B_{j}}}{\lambda^{1/m}}\Big)^{\frac{m}{m-1}}\bigg)e^{-\lambda}\lambda^{\sigma_{p_{0}} n-1}d\lambda.
\end{align*}

The doubling condition (\ref{doubling}) implies that
\begin{align*}
\frac{1}{\mu(B(x_{B_{j}},\lambda^{\frac{1}{m}}2^{-\frac{k-k_{0}}{m-1}}))}
&=\frac{\mu(B(x_{B_{j}},2^{-\frac{k-k_{0}}{m-1}}))}{\mu(B(x_{B_{j}},\lambda^{\frac{1}{m}}2^{-\frac{k-k_{0}}{m-1}}))}\cdot \frac{\mu(B(x_{B_{j}},2^{k}))}{\mu(B(x_{B_{j}},2^{-\frac{k-k_{0}}{m-1}}))}\cdot \frac{1}{\mu(B(x_{B_{j}},2^{k}))}\\
&\leq C2^{\frac{m(k-k_{0})}{m-1}n}(1+|t|)^{\frac{p_{0}n}{2}}\Big(1+\frac{1}{\lambda^{1/m}}\Big)^{n}\frac{1}{\mu(B_{j})}.
\end{align*}
This indicates that
\begin{align}\label{b2}
\|\chi_{2B_{j}}(I+2^{-\frac{m(k-k_{0})}{m-1}}L)^{-\sigma_{p_{0}} n}\chi_{2B_{j}}\|_{p_{0}\rightarrow 2}
\leq C2^{\frac{m(k-k_{0})}{m-1}\sigma_{p_{0}} n}(1+|t|)^{\frac{p_{0}\sigma_{p_{0}} n}{2}}\mu(B_{j})^{-\sigma_{p_{0}}},
\end{align}
and that
\begin{align}\label{b3}
\|\chi_{C_{\nu}(B_{j})}(I+2^{-\frac{m(k-k_{0})}{m-1}}L)^{-\sigma_{p_{0}} n}\chi_{B_{j}}\|_{p_{0}\rightarrow 2}
\leq& C2^{\frac{m(k-k_{0})}{m-1}\sigma_{p_{0}} n}(1+|t|)^{\frac{p_{0}\sigma_{p_{0}} n}{2}}\mu(B_{j})^{-\sigma_{p_{0}}}\nonumber\\
\times&\int_{0}^{\infty}{\rm exp}\bigg(-c\Big(\frac{2^{\frac{k-k_{0}}{m-1}}2^\nu r_{B_{j}}}{\lambda^{1/m}}\Big)^{\frac{m}{m-1}}\bigg)e^{-\lambda}(1+\frac{1}{\lambda^{1/m}})^{\sigma_{p_{0}} n}\lambda^{\sigma_{p_{0}} n-1}d\lambda\nonumber\\
\leq& C2^{\frac{m(k-k_{0})}{m-1}\sigma_{p_{0}} n}(1+|t|)^{\frac{p_{0}\sigma_{p_{0}} n}{2}}\mu(B_{j})^{-\sigma_{p_{0}}}\Big(2^{\frac{k-k_{0}}{m-1}}2^\nu r_{B_{j}}\Big)^{-N}.
\end{align}

Observing that $2^{\frac{k-k_{0}}{m-1}}r_{B_{j}}\geq 1$ for $k>k_{0}$, we combine (\ref{b4}),(\ref{b2}) with (\ref{b3}) to conclude that
\begin{align*}
\Big\|\sum_{k>k_{0}}\sum_{j\in J_{k}}\chi_{2B_{j}}G_{k}(L)b_{j}\Big\|_{2}^{2}
&\leq C(1+|t|)^{p_{0}\sigma_{p_{0}} n}\sum_{k>k_{0}}\sum_{j\in J_{k}}\mu(B_{j})^{-2\sigma_{p_{0}}}\|b_{j}\|_{p_{0}}^{2}\\
&+ C(1+|t|)^{p_{0}\sigma_{p_{0}} n}\sum_{k>k_{0}}\sum_{j\in J_{k}}\sum_{\nu=2}^\infty\mu(B_{j})^{-2\sigma_{p_{0}}}\|b_{j}\|_{p_{0}}^{2}\Big(2^{\frac{k-k_{0}}{m-1}}2^\nu r_{B_{j}}\Big)^{-N}\\
&\leq C(1+|t|)^{p_{0}\sigma_{p_{0}} n}\sum_{k>k_{0}}\sum_{j\in J_{k}}\mu(B_{j})^{-2\sigma_{p_{0}}}\|b_{j}\|_{p_{0}}^{2}\\
&\leq C(1+|t|)^{p_{0}\sigma_{p_{0}} n}\sum_{k>k_{0}}\sum_{j\in J_{k}}\mu(B_{j})^{-2\sigma_{p_{0}}}\|b_{j}\|_{p_{0}}^{2-p_0}\|b_{j}\|_{p_{0}}^{p_0}\\
&\leq C(1+|t|)^{p_{0}\sigma_{p_{0}} n}\alpha^{2-p_{0}}\sum_{j}\int_{X}|b_{j}(y)|^{p_{0}}d\mu(y)\\
&\leq C(1+|t|)^{p_{0}\sigma_{p_{0}} n}\alpha^{2-p_{0}}\|f\|_{p_{0}}^{p_{0}}.
\end{align*}

Finally, it remains to show that
\begin{align}\label{remain}
\Big\|\sum_{k>k_{0}}\sum_{j\in J_{k}}\chi_{(2B_{j})^{c}}G_{k}(L)b_{j}\Big\|_{2}^{2}\leq C\alpha^{2-p_{0}}\|f\|_{p_{0}}^{p_{0}}.
\end{align}
To continue, we claim that for any $N>\sigma_{p_{0}} n$, there exists a positive constant $C$ such that for any $\nu\geq 2$ and $k>k_{0}$, we have
\begin{align}\label{goal}
\|\chi_{C_{\nu}(B_{j})}G_{k}(L)\chi_{B_{j}}\|_{p_{0}\rightarrow 2}\leq C2^{-\nu N}\mu(B_{j})^{-\sigma_{p_{0}}}.
\end{align}

To show (\ref{goal}), we apply (\ref{besov2}) to obtain that
\begin{align}\label{a7}
\|\chi_{C_{\nu}(B_{j})}G_{k}(L)\chi_{B_{j}}\|_{p_{0}\rightarrow 2}
&\leq \sum_{\ell=-1}^{\infty}\|\chi_{C_{\nu}(B_{j})}G_{k,\ell}(L)\chi_{B_{j}}\|_{p_{0}\rightarrow 2}\nonumber\\
&\leq C \sum_{\ell=-1}^{\infty}\mu(B(x_{B_{j}},2^{-\ell/m}))^{-\sigma_{p_{0}}}(2^{\ell/m}2^\nu r_{B_{j}})^{-N}\|\delta_{2^{\ell}}G_{k,\ell}\|_{B^{N}},
\end{align}
where $G_{k,\ell}(\lambda):=G_{k}(\lambda)\phi(2^{-\ell}\lambda)$ satisfies
\begin{align*}
\|\delta_{2^{\ell}}G_{k,\ell}\|_{B^{N}}
=&\|(1+2^{\ell}\lambda)^{-\sigma_{p_{0}} n}(1-e^{-2^{mk+\ell}\lambda})^{M}\varphi_{1}(2^{-\frac{m(k-k_{0})}{m-1}}2^{\ell}\lambda)\phi(\lambda)\|_{B^{N}}\\
\leq&\|\phi(\lambda)(1+2^{\ell}\lambda)^{-\sigma_{p_{0}} n}\|_{B^{N}}\|\eta(\lambda)(1-e^{-2^{mk+\ell}\lambda})^{M}\|_{B^{N}}\|\eta(\lambda)\varphi_{1}(2^{-\frac{m(k-k_{0})}{m-1}}2^{\ell}\lambda)\|_{B^{N}}\\
\leq& C\|\phi(\lambda)(1+2^{\ell}\lambda)^{-\sigma_{p_{0}} n}\|_{C^{N+2}}\|\eta(\lambda)(1-e^{-2^{mk+\ell}\lambda})^{M}\|_{C^{N+2}}\|\eta(\lambda)\varphi_{1}(2^{-\frac{m(k-k_{0})}{m-1}}2^{\ell}\lambda)\|_{C^{N+2}}\\
\leq& C2^{-\ell \sigma_{p_{0}} n}.
\end{align*}
This, in combination with (\ref{a7}) and the doubling condition (\ref{doubling}), yields
\begin{align*}
\|\chi_{C_{\nu}(B_{j})}G_{k}(L)\chi_{B_{j}}\|_{p_{0}\rightarrow 2}
&\leq C\sum_{\ell=-1}^{\infty}\mu(B(x_{B_{j}},2^{-\ell/m}))^{-\sigma_{p_{0}}}(2^{\ell/m}2^\nu r_{B_{j}})^{-N}2^{-\ell \sigma_{p_{0}} n}\\
&\leq C2^{-\nu N}\mu(B_{j})^{-\sigma_{p_{0}}}\sum_{\ell=-1}^{\infty}(2^{\ell/m}r_{B_{j}})^{\sigma_{p_{0}} n-N}2^{-\ell\sigma_{p_{0}} n}\\
&\leq C2^{-\nu N}\mu(B_{j})^{-\sigma_{p_{0}}},
\end{align*}
where in the last inequality we used the fact that $r_{B_{j}}\geq 1$ when $k\geq k_0$. This ends the proof of estimate (\ref{goal}).

Now we follow the similar procedure as estimating the term $I$ to show (\ref{remain}).

\begin{align}\label{duiou1}
\Big\|\sum_{k>k_{0}}\sum_{j\in J_{k}}\chi_{(2B_{j})^{c}}G_{k}(L)b_{j}\Big\|_{2}^{2}
&=\sup\limits_{\|u\|_{2}=1}\bigg{(}\int_{X}u(x)\sum_{k\leq k_{0}}\sum_{j\in J_{k}}\chi_{(2B_{j})^{c}}G_{k}(L)b_{j}(x)d\mu(x)\bigg{)}^{2}\nonumber\\
&\leq \sup\limits_{\|u\|_{2}=1}\bigg{(}\sum_{k\leq k_{0}}\sum_{j\in J_{k}}\sum_{\nu=2}^{\infty}B_{\nu j}\bigg{)}^{2},
\end{align}
where $B_{\nu j}:=\int_{C_{\nu}(B_{j})}|G_{k}(L)b_{j}(x)||u(x)|d\mu(x)$.

By (\ref{max}) and (\ref{goal}), one obtains
\begin{align*}
B_{\nu j}
\leq \|G_{k}(L)b_{j}\|_{L^{2}(C_{\nu}(B_{j}))}\|u\|_{L^{2}(C_{\nu}(B_{j}))}
\leq C2^{-(N-\frac{n}{2})\nu}\alpha \mu(B_{j})\mathscr {M}(|u|^{2})(y)^{\frac{1}{2}}.
\end{align*}
Averaging over $B_{j}$ yields
\begin{align*}
B_{\nu j}
&\leq C2^{-(N-\frac{n}{2})\nu}\alpha \int_{B_{j}}\mathscr {M}(|u|^{2})(y)^{\frac{1}{2}}d\mu(y).
\end{align*}

Choosing $N$ sufficient large and then summing over $\nu\geq 1$ and $j$, we have
\begin{align*}
\Big\|\sum_{k>k_{0}}\sum_{j\in J_{k}}\chi_{(2B_{j})^{c}}G_{k}(L)b_{j}\Big\|_{2}^{2}
&\leq C\alpha^{2}\sup\limits_{\|u\|_{2}=1}\bigg(\int_{\mathop{\cup}\limits_{j}B_{j}}\mathscr {M}(|u|^{2})(y)^{\frac{1}{2}}d\mu(y)\bigg)^{2}\\
&\leq C\alpha^{2}\sup\limits_{\|u\|_{2}=1}\mu\Big(\mathop{\cup}\limits_{j}B_{j}\Big)\|u^{2}\|_{1}\\
&\leq C\alpha^{2-p_{0}}\|f\|_{p_{0}}^{p_{0}}.
\end{align*}
The proof of Theorem \ref{main} is complete.

\bigskip

 \noindent
 {\bf Acknowledgements}: The author would like to thank P. Chen and L.X.  Yan for helpful discussions. Z.J. Fan was supported by International Program for Ph.D. Candidates, Sun Yat-Sen University.

\end{document}